\title{\vspace*{18pt} Distributed Frequency Control through \\ MTDC Transmission Systems}
\author{ \IEEEauthorblockA{Martin Andreasson$^{\IEEEauthorrefmark{1}}$~\IEEEmembership{Student Member,~IEEE,} Roger Wiget,~\IEEEmembership{Student Member,~IEEE,} Dimos V. Dimarogonas,\\~\IEEEmembership{Member,~IEEE,} Karl H. Johansson,~\IEEEmembership{Fellow,~IEEE} and G\"oran Andersson,~\IEEEmembership{Fellow,~IEEE}
}
\\ 
\thanks{M. Andreasson, D. V. Dimarogonas and K. H. Johansson are with the ACCESS Linnaeus Centre, KTH Royal Institute of Technology, Stockholm, Sweden. R. Wiget and G. Andersson are with the Power Systems Laboratory, ETH Zurich, Switzerland. This work was supported in part by the European Commission, the Swedish Research Council (VR) and the Knut and Alice Wallenberg Foundation.
\IEEEauthorrefmark{1} Corresponding author (mandreas@kth.se).}
} 
\pgfplotsset{every x tick label/.append style={font=\small, yshift=0.2ex}} 
\pgfplotsset{every y tick label/.append style={font=\small, xshift=0.3ex}} 
\newtheorem{theorem}{Theorem}
\newtheorem{corollary}[theorem]{Corollary}
\newtheorem{lemma}{Lemma}
\newtheorem{remark}{Remark}
\newtheorem{assumption}{Assumption}
\theoremstyle{definition}
\newtheorem{objective}{Objective}
\DeclareMathOperator*{\argmin}{argmin}
\DeclareMathOperator*{\diag}{diag}
\newcommand{\beq}{\begin{equation}}
\newcommand{\eeq}{\end{equation}}
\newcommand{\bq}{\begin{eqnarray}}
\newcommand{\eq}{\end{eqnarray}}
\newcommand{\bqn}{\begin{eqnarray*}}
\newcommand{\eqn}{\end{eqnarray*}}
\newcommand{\bee}{\begin{enumerate}}
\newcommand{\eee}{\end{enumerate}}
\renewcommand{\p@subfigure}{}
\newlength\fheight
\newlength\fwidth
\begin{document}
\maketitle

\begin{abstract}
In this paper we propose distributed dynamic controllers for sharing both frequency containment and restoration reserves of asynchronous AC systems connected through a multi-terminal HVDC (MTDC) grid. The communication structure of the controller is distributed in the sense that only local and neighboring state information is needed, rather than the complete state. We derive sufficient stability conditions, which guarantee that the AC frequencies converge to the nominal frequency. Simultaneously, a global quadratic power generation cost function is minimized. The proposed controller also regulates the voltages of the MTDC grid, asymptotically minimizing a quadratic cost function of the deviations from the nominal DC voltages. 
The results are valid for distributed cable models of the HVDC grid (e.g. $\pi$-links), as well as AC systems of arbitrary number of synchronous machines, each modeled  by the swing equation. 
We also propose a decentralized, communication-free version of the controller.  
The proposed controllers are tested on a high-order dynamic model of a power system consisting of asynchronous AC grids, modelled as IEEE 14 bus networks, connected through a six-terminal HVDC grid. The performance of the controller is successfully evaluated through simulation.  
\end{abstract}

\section{Introduction}
Power transmission over long distances with low losses is an important challenge as the distances between generation and consumption increase. As the share of fluctuating renewables rises, so does the need to balance generation and consumption mismatches, often over large geographical areas, for which high-voltage direct current (HVDC) power transmission is a commonly used technology. 
In addition to offering lower cost solutions for longer overhead lines and cable transmission \cite{melhem2013electricity}, the controllability of the HVDC converters offers flexibility and means to mitigate problems due to power fluctuations from renewables. 
 Increased use of HVDC technologies for electrical power transmission suggests that future HVDC transmission systems are likely to consist of multiple terminals connected by HVDC transmission lines, so called multi-terminal HVDC (MTDC) systems \cite{Haileselassie2013Power}. 

The fast operation of the DC converters enables frequency regulation of one of the  AC grids connected to the HVDC link. One such example is the frequency regulation of the island of Gotland in Sweden, which is connected to the much stronger Nordic grid through an HVDC cable \cite{arrillaga2007flexible}. 
By connecting multiple AC grids by an MTDC system, enables frequency regulation of one or more of the AC grids connected. Traditional AC frequency controllers and HVDC voltage controllers do however not take advantage which the increased connectivity of the grids brings. Rather than sharing control reserves, each AC area is responsible for maintaining its own frequency in an acceptable range \cite{kundur1994power}, which reduces the need for frequency regulation reserves in the individual AC systems \cite{li2008frequency, Haileselassie2010}. A challenge is to bring back the HVDC grid, e.g. the DC voltages, to a normal operation state after a contingency have happens. 

Stability analysis of combined AC and MTDC systems was performed in \cite{Chaudhuri2011}. 
In \cite{dai2011voltage} and \cite{silva2012provision}, decentralized controllers are employed to share frequency control reserves. In \cite{silva2012provision} no stability analysis is performed, whereas \cite{dai2011voltage} guarantees stability provided that the connected AC areas have identical parameters and the voltage dynamics of the HVDC system are neglected. \cite{taylordecentralized} considers an optimal decentralized controller for AC grids connected by HVDC systems. 

By connecting the AC areas with a communication network supporting the frequency controllers, the performance can be further improved, compared to a decentralized controller structure without such communication. In this paper, we seek to explore controllers which improve performance of existing controllers. For this, we first propose a controller performance measure. 

Several distributed and decentralized controllers for sharing frequency control reserves have been proposed in the literature. 
In \cite{dai2010impact}, a distributed controller, relying on a communication network, was developed to share frequency control reserves of asynchronous AC transmission systems connected through an MTDC system. However, the controller requires a slack bus to control the DC voltage, and is thus only able to share the generation reserves of the non-slack AC areas. 
Another distributed controller is proposed in \cite{dai2013voltage}. Stability is guaranteed, and the need for a slack bus is eliminated. The voltage dynamics of the MTDC system are however neglected. Moreover the implementation of the controller requires every controller to access measurements of the DC voltages of all MTDC terminals. 
In \cite{andreasson2014distributedSecondary, andreasson2015coordinated} distributed secondary generation controllers are proposed, where the MTDC dynamics are explicitly modeled, and the DC voltages are controlled in addition to the frequencies. The controller doeas not rely on a slack bus for controlling the DC voltages. 
The distributed control architecture is more scalable  than a centralized architecture where information from all controllers has to be processed simultaneously. 
By using local and neighboring state information, we propose  controllers, which can be implemented even when communication is unavailable. 
This paper builds on the results in \cite{andreasson2014distributed, andreasson2014distributedSecondary, andreasson2015coordinated}, but significantly generalizes the models of the power system.

 The remainder of this paper is organized as follows. In Section \ref{sec:model}, the system model and the control objectives are defined. In Section \ref{sec:secondary_frequency_control}, a distributed secondary frequency controller for sharing frequency control and restoration reserves is presented, and is shown to satisfy the control objectives. 
 In Sections~\ref{sec:AC_network} and \ref{sec:PI-model}, the results are generalized to general AC networks and $\pi$-link models of the HVDC lines, respectively. 
 In Section \ref{sec:simulations}, simulations of the distributed controller on a six-terminal MTDC test system are provided, showing the effectiveness of the proposed controller. The paper ends with concluding remarks in Section \ref{sec:discussion}.


\section{Model and problem setup}
\label{sec:model}
\subsection{Notation}
\label{subsec:prel}
Let $\mathcal{G}$ be a static, undirected graph. Denote by $\mathcal{V}$ and $\mathcal{E}$ the set of vertices and edges of $\mathcal{G}$, respectively. Let $\mathcal{N}_i$ be the set of neighboring vertices to $i \in \mathcal{V}$.
Denote by $\mathcal{\mathcal{L}}_W$ the weighted Laplacian matrix of $\mathcal{G}$, with edge-weights given by the  elements of the diagonal matrix $W$ \cite{biggs1993algebraic}. 
Let $e_i$ denote the $i$th Cartesian unit vector. 
Let $\mathbb{C}^-$ denote the open left half complex plane, and $\bar{\mathbb{C}}^-$ its closure. We denote by $c_{n\times m}$ an $n\times m$-matrix, whose elements are all equal to $c$. For simplifying notation, we write $c_n$ for $c_{n\times 1}$. 
\subsection{Model and objective}
\label{subsec:model}
We will give here a unified model for an MTDC system interconnected with several asynchronous AC systems.
We consider an MTDC transmission system consisting of $n$ converters, denoted $i=1, \dots, n$, each connected to an AC system, i.e., there are no pure DC nodes of the MTDC grid. The converters are assumed to be connected by an MTDC transmission grid, i.e. there exist only one connected MTDC grid and not several MTDC grids. The node connected to converter $i$ is modelled by
\begin{align}
\begin{aligned}
C_i \dot{V}_i &= -\sum_{j\in \mathcal{N}_i} \frac{1}{R_{ij}}(V_i -V_j) + I_i^{\text{inj}} ,
\end{aligned}
\label{eq:hvdc_coordinated_voltage}
\end{align}
where $V_i$ is the DC voltage of converter node $i$, $C_i>0$ the total capacitance of the converter and the HVDC line connected to the considered converter, and $I_i^{\text{inj}}$  the injected current from the DC converter to the DC node.  The constant $R_{ij}$ denotes the resistance of the HVDC transmission line connecting the converters $i$ and $j$. 
The MTDC transmission grid is assumed to be connected.
Note that the converter model \eqref{eq:hvdc_coordinated_voltage} of the MTDC system does not take the dynamics of the HVDC lines into account, caused by the inductance and capacitance of the lines. In Section~\ref{sec:PI-model}, however, we show that the model \eqref{eq:hvdc_coordinated_voltage} can be generalized to a $\pi$-link model, where each HVDC line consists of an arbitrary number of resistors, inductors, and capacitors in series. 
Only HVDC nodes which are connected to a converter are considered in our model \eqref{eq:hvdc_coordinated_voltage}. This implies that intermediate nodes are not captured by the model. Modelling intermediate nodes would result in differential-algebraic equations, resulting in a far more complex analysis. While systems with intermediate nodes can be transformed into systems without intermediate nodes by Kron reduction \cite{Dorfler2013_Kron}, this is beyond the scope of this paper. 
 Each AC system is assumed to consist of a single generator which is connected to a DC converter, representing an aggregated model of an AC grid. The dynamics of the AC system are given by \cite{machowski2008power}:
\begin{align}
m_i \dot{\omega}_i &=  P^\text{gen}_i + P_i^{{m}} - P_i^{\text{inj}}, \label{eq:frequency}
\end{align}
where $m_i>0$ is its moment of inertia. The constant $P^\text{gen}_i$ is the generated power, $P^m_i$ is the power load, and $P_i^{\text{inj}}$ is the power injected to the DC system through converter $i$, respectively. 
While the model \eqref{eq:frequency} is restricted to single-generator AC systems, we show in Section~\ref{sec:AC_network} that this model can be generalized to a network of arbitrary many generators. \\
The control objective can now be stated as follows.
\begin{objective}
\label{obj:1_hvdc_coordinated}
The frequency deviations are asymptotically equal to zero, i.e.,
\begin{align}
\lim_{t\rightarrow \infty} \omega_i(t)-\omega^{\text{ref}} = 0  \quad  i = 1, \dots, n, \label{eq:hvdc_coordinated_frequency_objective}
\end{align}
where $\omega^{\text{ref}}$ is the nominal frequency. The total quadratic cost of the power generation is minimized asymptotically, i.e., $\lim_{t\rightarrow \infty} P_i^\text{gen} = P_i^{\text{gen}*},  \forall i=1, \dots, n$, where
\begin{align}
[P_1^{\text{gen}*}, \dots, P_n^{\text{gen}*}]  =  \argmin_{P_1, \dots, P_n} \frac 12 \sum_{i=1}^n f^P_i \Big(P_i^\text{gen}\Big)^2 \label{eq:hvdc_coordinated_generation_objective} 
\end{align}
subject to \eqref{eq:hvdc_coordinated_frequency_objective}, i.e., $P^\text{gen}_i + P_i^{{m}} - P_i^{\text{inj}} = 0, \; \forall i=1, \dots, n$ and $\sum_{i=1}^n P_i^\text{inj} = 0$, i.e., power balance both in the AC grids and in the MTDC grid. The positive constants  $f^P_i$ represent the local cost of generating power. 
Finally, the DC voltages are such that the a quadratic cost function of the voltage deviations is minimized asymptotically, i.e., $\lim_{t\rightarrow \infty} V_i = V_i^*, \forall i=1, \dots, n$, where
\begin{align}
 [V_1^*, \dots,  V_n^*] = \argmin_{V_1, \dots, V_n} \frac 12 \sum_{i=1}^n f^V_i (V_i - V_i^\text{ref})^2
 \label{eq:hvdc_coordinated_voltage_objective}
\end{align}
subject to \eqref{eq:hvdc_coordinated_frequency_objective}--\eqref{eq:hvdc_coordinated_generation_objective}, and where the $f^V_i$ is a positive constant reflecting the local cost of DC voltage deviations and $V_i^\text{ref}$ is the nominal DC voltage of converter $i$. 
\end{objective}

\begin{remark}
Note that the order in which the optimization problems \eqref{eq:hvdc_coordinated_generation_objective}--\eqref{eq:hvdc_coordinated_voltage_objective} are solved is crucial, as \eqref{eq:hvdc_coordinated_frequency_objective} and the optimal solution of \eqref{eq:hvdc_coordinated_generation_objective} are constraints of \eqref{eq:hvdc_coordinated_voltage_objective}. 
\end{remark}

\begin{remark}
The minimization of \eqref{eq:hvdc_coordinated_generation_objective} is equivalent to power sharing, where the generated power of AC area $i$ is asymptotically inverse proportional to the cost $f_i^P$. The cost $f_i^P$ can be chosen to reflect the available generation capacity of area $i$.
\end{remark}
\begin{remark}
It is in general not possible that $\lim_{t\rightarrow \infty} V_i(t) = V_i^\text{ref} \; \forall i=1, \dots, n$, since this does not allow for the  currents between the HVDC converters to change by \eqref{eq:hvdc_coordinated_voltage}. Note that the optimal solution to \eqref{eq:hvdc_coordinated_generation_objective} fixes the relative DC voltages, leaving only the ground voltage as a decision variable of \eqref{eq:hvdc_coordinated_voltage_objective}.
Note also that the reference DC voltages $V_i^\text{ref}, \; i=1, \dots, n$, are generally not uniform, as is the reference frequency $\omega^\text{ref}$. 
 \end{remark}
  \begin{remark}
 Note that Objective \ref{obj:1_hvdc_coordinated} does not include constraints of, e.g., generation and line capacities. This requires that the perturbations from the operating point are sufficiently small, to guarantee that these constraints are not violated. 
 \end{remark}

\section{Distributed frequency control}
\label{sec:secondary_frequency_control}

\subsection{Controller structure}
\label{subsec:controller_structure}
In this section we propose a distributed secondary frequency controller. In addition to the generation controller proposed in \cite{andreasson2014distributedSecondary}, we also propose a secondary controller for the power injections into the HVDC grid. We implement the controllers for single AC generators. In Section~\ref{sec:AC_network}, we generalize the controller for AC grids of arbitrary size. 

The distributed generation controller of the AC systems is given by
\begin{align}
P^\text{gen}_i &=- K_i^{\text{droop}} (\omega_i-\omega^{\text{ref}}) -  \frac{K^V_i}{K^\omega_i} K^\text{droop, I}_i \eta_i \nonumber \\
\dot{\eta}_i &= K_i^{\text{droop,I}}(\omega_i-\omega^{\text{ref}}) - \sum_{j\in \mathcal{N}_i} c^\eta_{ij} (\eta_i-\eta_j),
\label{eq:hvdc_coordinated_droop_control_secondary_distributed}
\end{align}
where $K_i^\text{droop}$ and $K_i^\text{droop, I}$ are positive controller parameters. Moreover, $c^\eta_{ij} = c^\eta_{ji}>0$, i.e., the communication graph is supposed to be undirected.
The above controller can be interpreted as a distributed PI-controller, with a distributed averaging filter acting on the integral states $\eta_i$. The first line of Equation~\eqref{eq:hvdc_coordinated_droop_control_secondary_distributed}
 resembles a decentralized droop controller with a setpoint given by $\eta_i$. The second line of Equation~\eqref{eq:hvdc_coordinated_droop_control_secondary_distributed} updates the variable $\eta_i$ in a distributed fashion by a distributed averaging integral controller. 
 The magnitudes of the variables $c_{ij}^\eta$ determine how fast the generated power levels converge. While a larger magnitude of $c_{ij}^\eta$ could lead to faster convergence of the generated power, it can also induce oscillations. 
 It is possible to implement a decentralized version of \eqref{eq:hvdc_coordinated_droop_control_secondary_distributed} by dropping the states $\eta_i$. This results in the following controller
\begin{align}
P^\text{gen}_i &=- K_i^{\text{droop}} (\omega_i-\omega^{\text{ref}}). 
\label{eq:hvdc_coordinated_droop_control_secondary_distributed_decentralized_version}
\end{align}
The proposed converter controllers governing the power injections from the AC systems into the HVDC grid are given by
\begin{align}
\label{eq:voltage_control_secondary}
P_i^{\text{inj}} &=  K_i^{{\omega}} (\omega_i - \omega^{\text{ref}}) + K_i^{{V}}(V_i^{\text{ref}}-V_i) \nonumber \\ 
& \;\;\;\;+ \sum_{j\in \mathcal{N}_i} c^\phi_{ij} (\phi_i - \phi_j) \nonumber \\
\dot{\phi}_i &= \frac{K^\omega_i}{K^V_i} \omega_i - \gamma \phi_i, 
\end{align}
where  $K_i^V$ and $K_i^\omega$ are positive controller parameters, and $P_i^{\text{inj, nom}}$ is the nominal injected power, $\gamma\ge 0$ and $c^\phi_{ij} = c^\phi_{ji}>0$. 
If $\gamma=0$, the converter controller \eqref{eq:voltage_control_secondary} can be interpreted as an emulation of an AC network between the isolated AC areas, as it resembles the swing equation. The auxiliary controller variables $\phi_i$ are then equivalent to the phase angles of AC area $i$, whose differences govern the power transfer between the areas. 
Larger magnitudes of $c^\phi_{ij}$ correspond to higher conductances of the AC lines, and thus stronger coupling and faster synchronization of the frequencies. 
If $\gamma > 0$, damping is added to the dynamics of $\phi_i$. Damping generally improves stability margins, and turns out to be very useful in the stability analysis. However, a nonzero $\gamma$ also implies that the AC dynamics are not emulated perfectly. This implies that exact frequency synchronization might not be possible in general. 
 In contrast to a connection with AC lines, the power is fed into the MTDC grid and then transfered to the other AC areas through the MTDC grid rather than through an AC grid. Also the converter controller can be implemented in a decentralized version by dropping the states $\phi_i$, resulting in the following controller
\begin{align}
\label{eq:voltage_control_secondary_decentralized_version}
P_i^{\text{inj}} &=  K_i^{{\omega}} (\omega_i - \omega^{\text{ref}}) + K_i^{{V}}(V_i^{\text{ref}}-V_i). 
\end{align}
The HVDC converter response is assumed to be instantaneous, i.e., injected power on the AC side is immediately and losslessly converted to DC power.  This assumption is reasonable due to the dynamics of the converter typically being orders of magnitudes faster than the primary frequency control dynamics of the AC system \cite{kundur1994power}.
The relation between the injected HVDC current and the injected AC power is thus given by
\begin{align}
V_iI_i^{\text{inj}} = P_i^{\text{inj}}. \label{eq:power-current_nonlinear}
\end{align}
By assuming $V_i=V^{\text{nom}}\;  i=1, \dots, n$, where $V^{\text{nom}}$ is a global nominal DC voltage, we obtain
\begin{align}
V^{\text{nom}}I_i^{\text{inj}} = P_i^{\text{inj}}. \label{eq:power-current}
\end{align}
Assumption \eqref{eq:power-current} relies on the assumption that the voltages $V_i$ do not deviate significantly from the nominal voltage $V^{\text{nom}}$. Since for most HVDC converters the acceptable deviation from the nominal voltage is less than $5 \%$ \cite{jovcic2015high}, the approximation \eqref{eq:power-current} would result in a relative error smaller than $5 \%$.

\subsection{Stability analysis}
\label{sec:secondary_frequency_control_stability}
We now analyze the stability of the closed-loop system. Define the state vectors $\hat{\omega}=\omega - \omega^\text{ref}1_n$ and $\hat{V}=V - V^\text{ref}$, where $\omega=[\omega_1, \dots, \omega
_n]^T$, $V=[V_1, \dots, V_n]^T$, $V^\text{ref} = [V_1^\text{ref}, \dots, V_n^\text{ref}]^T$, $\eta = [\eta_1, \dots, \eta_n]^T$, and $\phi = [\phi_1, \dots, \phi_n]$. 
Combining the MTDC \eqref{eq:hvdc_coordinated_voltage}, the AC dynamics \eqref{eq:frequency} with the generation control \eqref{eq:hvdc_coordinated_droop_control_secondary_distributed}, the converter controller \eqref{eq:voltage_control_secondary} with the power-current relationship \eqref{eq:power-current}, we obtain the closed-loop dynamics
\begin{IEEEeqnarray}{lcl}
\dot{ \hat{\omega} } &=& M \Big(- (K^{\text{droop}} + K^\omega) \hat{\omega} + K^V \hat{V} \nonumber \\
&& - {K^V}(K^\omega)^{-1} K^\text{droop, I} \eta - \mathcal{L}_\phi \phi  + P^{{m}}  \Big) \nonumber \\
\dot{\hat{V}} &=& 
\frac{1}{V^{\text{nom}}}E {K}^\omega \hat{\omega} -E\left(\mathcal{L}_R + \frac{K^V}{V^{\text{nom}}} \right) \hat{V} + \frac{1}{V^\text{nom}} E \mathcal{L}_\phi  \phi \nonumber \\
\dot{\eta} &=&  K^{\text{droop,I}}\hat{\omega} - \mathcal{L}_\eta \eta \nonumber \\
\dot{\phi} &=&  (K^V)^{-1}{K}^\omega \hat{\omega}  -\gamma \phi, \label{eq:cl_dynamics_vec_delta_int_coordinated}
\end{IEEEeqnarray}
where 
$M=\diag({m_1}^{-1}, \hdots , {m_n}^{-1})$ is a matrix of inverse generator inertia, 
 $E=\diag(C_1^{-1}, \dots, C_n^{-1})$ is a matrix of electrical elastances,  $\mathcal{L}_R$ is the weighted Laplacian matrix  of the MTDC grid with edge-weights $1/R_{ij}$,  $\mathcal{L}_\eta$ and $\mathcal{L}_\phi$ are the weighted Laplacian matrices of the communication graphs with edge-weights $c^\eta_{ij}$ and $c^\phi_{ij}$, respectively, and $P^m=[P^m_1, \dots, P^m_n]^T$. We define the diagonal matrices of the controller gains as $K^\omega = \diag(K^\omega_1, \dots, K^\omega_n)$, etc. 

Let $y=[\hat{\omega}^T, \hat{V}^T]^T$ define the output of \eqref{eq:cl_dynamics_vec_delta_int_coordinated}. Clearly the linear combination $1_n^T\phi$ is unobservable and marginally stable with respect to the dynamics \eqref{eq:cl_dynamics_vec_delta_int_coordinated}, as it lies in the nullspace of $\mathcal{L}_\phi$. In order to facilitate the stability analysis, we will perform a state-transformation to this unobservable mode. Consider the following state-transformation:
\begin{align}
\phi' = \begin{bmatrix}
\frac{1}{\sqrt{n}} 1_n^T \\ S^T
\end{bmatrix} \phi
\qquad
\phi = \begin{bmatrix}
\frac{1}{\sqrt{n}} 1_n & S
\end{bmatrix} \phi'
\label{eq:transformation_phi}
\end{align}
where $S$ is an $n\times(n-1)$ matrix such that $\left[\frac{1}{\sqrt{n}}1_n \; S\right]$ is orthonormal. By applying the state-transformation \eqref{eq:transformation_phi} to \eqref{eq:cl_dynamics_vec_delta_int_coordinated}, we obtain dynamics where it can be shown that 
the state $\phi_1'$ is unobservable with respect to the defined output. Hence, omitting $\phi_1'$ does not affect the output dynamics. Thus, we define $\phi''=[\phi_2', \dots, \phi_n']$, and obtain the dynamics
\begin{IEEEeqnarray}{lcl}
\dot{\hat{\omega}} &=& M \Big(- (K^{\text{droop}} + K^\omega) \hat{\omega} + K^V \hat{V} \nonumber \\
&& - {K^V}(K^\omega)^{-1} K^\text{droop, I} \eta - \mathcal{L}_\phi S \phi''  + P^{{m}}  \Big) \nonumber \\
\dot{\hat{V}} &=& 
\frac{1}{V^{\text{nom}}}E {K}^\omega \hat{\omega} -E\left(\mathcal{L}_R + \frac{K^V}{V^{\text{nom}}} \right) \hat{V} + \frac{1}{V^\text{nom}} E \mathcal{L}_\phi  S\phi''  \nonumber \\
\dot{\eta} &=&  K^{\text{droop,I}}\hat{\omega} - \mathcal{L}_\eta \eta \nonumber \\
\dot{\phi}'' &=&  S^T(K^V)^{-1}{K}^\omega \hat{\omega}  -\gamma \phi''. \label{eq:cl_dynamics_vec_delta_int_coordinated_double_prime}
\end{IEEEeqnarray}
We are now ready to show the main stability result of this section. The following assumptions are later used as sufficient conditions for closed-loop stability. 
\begin{assumption}
\label{ass:L_phi_coordinated}
The Laplacian matrix satisfies $\mathcal{L}_\phi = k_\phi \mathcal{L}_R$.
\end{assumption}
Assumption \ref{ass:L_phi_coordinated} can be interpreted as the emulated AC dynamics of \eqref{eq:voltage_control_secondary} having the same susceptance ratios as the conductance ratios of the HVDC lines. 
Assumption~\ref{ass:L_phi_coordinated}  can always be satisfied by appropriate choices of
the constants $c_{ij}$ in \eqref{eq:voltage_control_secondary}.
\begin{assumption}
\label{ass:gamma_coordinated}
The gain $\gamma$ satisfies $\gamma > {k_\phi}/({4V^\text{nom}})$. 
\end{assumption}
Assumption \ref{ass:gamma_coordinated} lower bounds for the damping coefficient of the converter controllers. Note that the bound on $\gamma$ is independent of the topology of the communication network. This is particularly desirable in a plug-and-play setting, where new nodes can be added to the system, without having to change $\gamma$. 
\begin{theorem}
\label{th:stability_A_coordinated}
If Assumptions \ref{ass:L_phi_coordinated} and \ref{ass:gamma_coordinated} hold, the equilibrium of \eqref{eq:cl_dynamics_vec_delta_int_coordinated_double_prime} is globally asymptotically stable. 
\end{theorem}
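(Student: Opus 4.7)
The plan is to prove global asymptotic stability via a quadratic Lyapunov function. Because \eqref{eq:cl_dynamics_vec_delta_int_coordinated_double_prime} is a linear--affine system, I first translate the state so that the equilibrium (whose existence follows from invertibility of the linear part under the stated assumptions) lies at the origin, after which it suffices to show that the associated homogeneous linear system is asymptotically stable.

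I propose a Lyapunov function of the form
\[
U = \tfrac{1}{2}\hat{\omega}^T P_\omega \hat{\omega} + \tfrac{1}{2}\hat{V}^T P_V \hat{V} + \tfrac{1}{2}\eta^T P_\eta \eta + \tfrac{1}{2}(\phi'')^T P_\phi \phi'',
\]
with diagonal positive-definite weights tuned so that the cross terms in $\dot U$ either cancel or can be dominated. A natural target is $P_\omega = (K^V)^{-1}M^{-1}$, $P_V = V^{\text{nom}}(K^\omega)^{-1}E^{-1}$, $P_\eta = (K^\omega)^{-1}$, and $P_\phi$ a suitable multiple of $(K^\omega)^{-1}K^V$. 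With this choice two pleasant things happen: the $\hat\omega\leftrightarrow\eta$ cross terms generated by $-K^V(K^\omega)^{-1}K^{\text{droop,I}}\eta$ in $\dot{\hat\omega}$ and $K^{\text{droop,I}}\hat\omega$ in $\dot\eta$ cancel exactly, and the $\hat\omega\leftrightarrow\hat V$ cross terms (which cannot be cancelled by sign choice, as both feed-forward signs are positive) combine with the diagonal dissipation $-\hat\omega^T(K^V)^{-1}(K^{\text{droop}}+K^\omega)\hat\omega$ and $-\hat V^T(K^\omega)^{-1}K^V\hat V$ into a $2\times 2$ block quadratic form that is negative semidefinite; a Schur-complement check reduces this to $(K^{\text{droop}}+K^\omega)(K^\omega)^{-1}\succeq I$, which holds trivially.

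The crux lies in the $\phi''$ cross terms, where both assumptions enter. Substituting $\mathcal L_\phi = k_\phi\mathcal L_R$ from Assumption \ref{ass:L_phi_coordinated} aligns the $\phi''$-induced couplings in $\dot{\hat V}$ and $\dot{\hat\omega}$ with the resistive dissipation $-V^{\text{nom}}\hat V^T(K^\omega)^{-1}\mathcal L_R\hat V$. I would then complete the square so that this term absorbs the cross contribution $k_\phi \hat V^T(K^\omega)^{-1}\mathcal L_R S\phi''$ into a single nonpositive quadratic $-V^{\text{nom}}(\hat V-\alpha S\phi'')^T(K^\omega)^{-1}\mathcal L_R(\hat V-\alpha S\phi'')$ with $\alpha = k_\phi/V^{\text{nom}}$, leaving a residual of order $(k_\phi^2/V^{\text{nom}})\|S\phi''\|^2$. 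This residual, together with the remaining $\hat\omega\leftrightarrow\phi''$ cross term, is then dominated by the damping $-\gamma(\phi'')^T P_\phi \phi''$ using Young's inequality $2|xy|\le x^2+y^2$; tracking all scalings carefully yields precisely the threshold $\gamma>k_\phi/(4V^{\text{nom}})$ of Assumption \ref{ass:gamma_coordinated}. The factor $1/4$ is the signature of a single completion of squares.

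Once $\dot U\le 0$ is in hand, I invoke LaSalle's invariance principle. On the largest invariant set in $\{\dot U=0\}$, the individual dissipation contributions force $\hat\omega=0$, $\phi''=0$, $\hat V=0$, and $\mathcal L_\eta\eta=0$; the last condition gives $\eta\in\mathrm{span}(1_n)$, and substituting $\hat\omega\equiv 0$ into the (shifted) equation $\dot{\hat\omega}=0$ forces $\eta=0$, which yields global asymptotic stability of the origin and hence of the equilibrium of \eqref{eq:cl_dynamics_vec_delta_int_coordinated_double_prime}. The main obstacle will be the bookkeeping in the completion-of-squares step: the matrix weights $P_\omega,P_V,P_\eta,P_\phi$, the factors of $V^{\text{nom}}$, and the presence of the tall matrix $S$ (arising from removing the unobservable mode $1_n^T\phi$) must all line up so that the threshold comes out exactly as stated, rather than as a looser sufficient bound.
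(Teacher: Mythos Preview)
Your overall strategy---a block-diagonal quadratic Lyapunov function plus LaSalle---is exactly what the paper does, but two of your weight choices will not deliver the stated threshold. First, with $P_V = V^{\text{nom}}(K^\omega)^{-1}E^{-1}$ the resistive term becomes $-V^{\text{nom}}\hat V^T(K^\omega)^{-1}\mathcal L_R\hat V$, and for non-scalar $K^\omega$ the symmetric part of $(K^\omega)^{-1}\mathcal L_R$ is \emph{not} positive semidefinite in general (take $n=2$, $\mathcal L_R=\bigl[\begin{smallmatrix}1&-1\\-1&1\end{smallmatrix}\bigr]$, $K^\omega=\diag(1,\epsilon)$), so your completion of squares in $\hat V$ is not guaranteed to produce a nonpositive quadratic. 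Dropping the spurious $(K^\omega)^{-1}$ everywhere---i.e.\ using $P_\omega=K^\omega(K^V)^{-1}M^{-1}$, $P_V=V^{\text{nom}}E^{-1}=V^{\text{nom}}C$, $P_\eta=I$---restores the sign and still cancels the $\hat\omega\leftrightarrow\eta$ and (half of) the $\hat\omega\leftrightarrow\hat V$ cross terms.

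The more serious gap is your diagonal $P_\phi$ and the plan to handle the $\hat\omega\leftrightarrow\phi''$ cross term by Young's inequality. That cross term is $-\hat\omega^T K^\omega(K^V)^{-1}\mathcal L_\phi S\phi''=-k_\phi\hat\omega^T K^\omega(K^V)^{-1}\mathcal L_R S\phi''$, which carries a full Laplacian; any Young-type bound against a merely diagonal $-\gamma(\phi'')^TP_\phi\phi''$ will force $\gamma$ to scale with $\lambda_{\max}(\mathcal L_R)$, so you cannot recover the topology-free condition of Assumption~\ref{ass:gamma_coordinated}. The paper's key move is to take $P_\phi=S^T\mathcal L_\phi S$. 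Then, using $\mathcal L_\phi SS^T=\mathcal L_\phi$, the $\hat\omega\leftrightarrow\phi''$ contributions from $\hat\omega^TP_\omega\dot{\hat\omega}$ and $(\phi'')^TP_\phi\dot\phi''$ cancel \emph{exactly}---no Young is needed---and the remaining $(\hat V,\phi'')$ block becomes
\[
Q_2=\begin{bmatrix}V^{\text{nom}}S^T\mathcal L_R S & -\tfrac{k_\phi}{2}S^T\mathcal L_R S\\[2pt] -\tfrac{k_\phi}{2}S^T\mathcal L_R S & \gamma k_\phi S^T\mathcal L_R S\end{bmatrix},
\]
whose Schur complement is $(\gamma k_\phi - k_\phi^2/(4V^{\text{nom}}))S^T\mathcal L_R S$, giving precisely $\gamma>k_\phi/(4V^{\text{nom}})$. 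So your intuition that the $1/4$ comes from a single completion of squares is right, but only after the $\hat\omega\leftrightarrow\phi''$ term has been eliminated structurally rather than estimated.
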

\begin{proof}
The proof follows from  Theorem~\ref{th:hvdc_coordinated_voltage_control_secondary_network_stability}, and is thus omitted. 
\end{proof}
\begin{corollary}
\label{cor:hvdc_coordinated_equilibrium}
Let Assumption~\ref{ass:L_phi_coordinated} hold and let $\gamma$, $k_\phi$ be given such that Assumption~\ref{ass:gamma_coordinated} holds. 
Let $K^V, K^\omega$ and $K^\text{droop, I}$ be such that $(F^P)^{-1} = K^V(K^\omega)^{-1}K^\text{droop, I}$ and $F^V=K^V$, where $F^P = \diag(f^P_1, \dots, f^P_n)$ and $F^V = \diag(f^V_1, \dots, f^V_n)$. 
 Then the dynamics \eqref{eq:cl_dynamics_vec_delta_int_coordinated_double_prime} satisfy Objective~\ref{obj:1_hvdc_coordinated} in the limit when $\norm{(K^\omega)^{-1} K^V}_\infty \rightarrow 0$, provided that the disturbance $P^m_i$ is constant. 
\end{corollary}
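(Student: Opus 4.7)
The plan is to invoke Theorem~\ref{th:stability_A_coordinated} to reduce the problem to a static equilibrium analysis, and then to verify each of the three parts of Objective~\ref{obj:1_hvdc_coordinated} by passing to the limit $\|(K^\omega)^{-1}K^V\|_\infty\to 0$ in the equilibrium equations of \eqref{eq:cl_dynamics_vec_delta_int_coordinated_double_prime}.

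First, I would set $\dot\phi''=0$ (equivalently $\dot\phi=0$ for the original $\phi$), which gives the equilibrium identity $\gamma\phi = (K^V)^{-1}K^\omega\hat\omega$, i.e.\ $\hat\omega = \gamma(K^\omega)^{-1}K^V\phi$. Since Theorem~\ref{th:stability_A_coordinated} guarantees boundedness of the state, and in particular of $\phi$, the limit $\|(K^\omega)^{-1}K^V\|_\infty\to 0$ forces $\hat\omega\to 0$, establishing \eqref{eq:hvdc_coordinated_frequency_objective}.

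Next, from $\dot\eta=0$ one has $\mathcal{L}_\eta\eta = K^{\text{droop,I}}\hat\omega$. Combining with the design relation $K^V(K^\omega)^{-1}K^{\text{droop,I}}=(F^P)^{-1}$ and the previous identity reduces this to $\mathcal{L}_\eta\eta = \gamma(F^P)^{-1}\phi$. In the limit I would conclude that $\eta$ is driven into $\ker(\mathcal{L}_\eta)=\mathrm{span}(1_n)$, i.e.\ $\eta\to\eta_0 1_n$. The generation law in \eqref{eq:hvdc_coordinated_droop_control_secondary_distributed} then collapses to $P_i^{\text{gen}}\to -(f_i^P)^{-1}\eta_0$, which satisfies $f_i^P P_i^{\text{gen}}=f_j^P P_j^{\text{gen}}$ for every $i,j$---the KKT condition for \eqref{eq:hvdc_coordinated_generation_objective}---and the common value $\eta_0$ is pinned by summing the local AC balance $P_i^{\text{gen}}+P_i^m=P_i^{\text{inj}}$ together with $1_n^T P^{\text{inj}}=0$.

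For the voltage objective, I would sum the converter controller \eqref{eq:voltage_control_secondary} over $i$; the coupling term drops out because $1_n^T\mathcal{L}_\phi=0$, yielding $\sum_i P_i^{\text{inj}}=1_n^T K^\omega\hat\omega - 1_n^T K^V\hat V$. DC-side power balance $V^{\text{nom}}1_n^T\mathcal{L}_R V=0$ forces $\sum_i P_i^{\text{inj}}=0$, so $1_n^T K^V\hat V = 1_n^T K^\omega\hat\omega$. Passing to the limit $\hat\omega\to 0$ and using the design choice $F^V=K^V$, this becomes $1_n^T F^V\hat V=0$, which is precisely the KKT condition for \eqref{eq:hvdc_coordinated_voltage_objective}, since by Remark~3 the ground-voltage shift is the only remaining decision variable once the generation is fixed. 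The main obstacle I expect is controlling the indeterminate product $K^{\text{droop,I}}\hat\omega$ as $K^{\text{droop,I}}\to\infty$ and $\hat\omega\to 0$: a careful singular-perturbation argument exploiting the design relations and the Laplacian nullspace structure will be needed to guarantee that the non-$1_n$ component of $\eta$ is asymptotically annihilated, which is what actually delivers generation optimality. The hypothesis that $P^m_i$ is constant is essential, since with a time-varying disturbance no static equilibrium exists to which the above KKT analysis could be applied.
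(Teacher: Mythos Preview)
Your proposal is correct and follows essentially the paper's route: reduce to the equilibrium via Theorem~\ref{th:stability_A_coordinated}, then check the three KKT conditions in the limit. The one structural difference is how you reach $\hat\omega=0$. You invert the $\phi$-equilibrium to $\hat\omega=\gamma(K^\omega)^{-1}K^V\phi$ and invoke boundedness of $\phi$ to send $\hat\omega\to 0$ in one stroke, whereas the paper works with the reduced variable $\phi''$ and proceeds in two steps---first $S^T\hat\omega=0$ (hence $\hat\omega=k_1 1_n$) from $\dot\phi''=0$, then $k_1=0$ from premultiplying the $\eta$-equilibrium $K^{\text{droop,I}}\hat\omega=\mathcal L_\eta\eta$ by $1_n^T$. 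The paper's second step is robust to $K^{\text{droop,I}}\to\infty$, which partially defuses the indeterminate-product concern you correctly flag; your one-step version leans on a uniform-in-gain bound on the equilibrium $\phi$, which Theorem~\ref{th:stability_A_coordinated} (stated for fixed gains) does not directly supply---though the paper's own limit argument at the $\phi''$ stage is comparably informal on this point. The voltage and generation arguments coincide with the paper's: your summation of \eqref{eq:voltage_control_secondary} over $i$ is exactly the paper's premultiplication of the $\hat V$-equilibrium by $1_n^T$, and both arrive at $1_n^TK^V\hat V=0$ and $P^{\text{gen}}\in\mathrm{span}\big((F^P)^{-1}1_n\big)$.
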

\begin{proof}
By Theorem~\ref{th:stability_A_coordinated},  \eqref{eq:cl_dynamics_vec_delta_int_coordinated_double_prime} has a unique and  stable equilibrium. 
Letting $\dot{\phi}''=0_{n-1}$ implies
$S^T(K^V)^{-1}K^\omega \hat{\omega} - \gamma \phi'' = 0_{n-1} 
$. Now $\norm{(K^\omega)^{-1} K^V}_\infty \rightarrow 0$ implies that $S^T\hat{\omega} = 0 \Leftrightarrow \hat{\omega} = k_1 1_n$ for some $k_1\in \mathbb{R}$. Letting $\dot{\eta}=0_n$ in  \eqref{eq:cl_dynamics_vec_delta_int_coordinated_double_prime} yields
\begin{align*}
K^\text{droop, I} \hat{\omega} - \mathcal{L}_\eta \eta = 0_n.
\end{align*}
By inserting $\hat{\omega} = k_1 1_n$ and premultiplying the above equation with $1_n^T$, we obtain that $k_1=0$, so $\hat{\omega}=0_n$ so  Equation~\eqref{eq:hvdc_coordinated_frequency_objective} of Objective~\ref{obj:1_hvdc_coordinated} is thus satisfied. Thus   $\eta = k_2 1_n$ for some $k_2\in \mathbb{R}$.  Finally, we let $\dot{\hat{V}}=0_n$ in \eqref{eq:cl_dynamics_vec_delta_int_coordinated_double_prime}:
\begin{align}
K^\omega \hat{\omega} - \Big( V^\text{nom} \mathcal{L}_R + K^V \Big) \hat{V} +  \mathcal{L}_\phi S \phi'' = 0_n. \label{eq:cl_dynamics_vec_delta_int_coordinated_double_prime_n+1:2n}
\end{align}
Inserting $\hat{\omega}=0_n$ and premultiplying \eqref{eq:cl_dynamics_vec_delta_int_coordinated_double_prime_n+1:2n} with $1_n^T$ yield  
\begin{align}
1_n^TK^V\hat{V}=0.
\label{eq:hvdc_coordinated_voltage_equilibrium}
\end{align}
Inserting $\hat{\omega}=0_n$ and $\eta = k_2 1_n$ in \eqref{eq:hvdc_coordinated_droop_control_secondary_distributed} yields 
\begin{align}
P^\text{gen} = - k_2 K^V(K^\omega)^{-1} K^\text{droop, I} 1_n,
\label{eq:hvdc_coordinated_eta_equilibrium}
\end{align}
where $P^\text{gen} = [P^\text{gen}_1, \dots, P^\text{gen}_n]^T$.
It now remains to show that the equilibrium of \eqref{eq:cl_dynamics_vec_delta_int_coordinated_double_prime} minimizes the  cost functions \eqref{eq:hvdc_coordinated_generation_objective}  and \eqref{eq:hvdc_coordinated_voltage_objective} of Objective~\ref{obj:1_hvdc_coordinated}. Consider first \eqref{eq:hvdc_coordinated_generation_objective}, with the constraints
$P^\text{gen}_i  + P_i^{{m}} - P_i^{\text{inj}} = 0, i=1, \dots, n$ and $\sum_{i=1}^n P_i^\text{inj} = 0$. By summing the first constraints we obtain $\sum_{i=1}^n P^\text{gen}_i  = - \sum_{i=1}^n  P_i^{{m}} $. The KKT condition of \eqref{eq:hvdc_coordinated_generation_objective} is 
\begin{align}
F^P P^\text{gen} = - k_3 1_n.
\label{eq:hvdc_coordinated_KKT_generation}
\end{align}
Since $(F^P)^{-1} = K^V(K^\omega)^{-1}K^\text{droop, I}$,  \eqref{eq:hvdc_coordinated_eta_equilibrium} and \eqref{eq:hvdc_coordinated_KKT_generation} are identical for $k_2=k_3$. We conclude that \eqref{eq:hvdc_coordinated_generation_objective} is minimized. 
Since $P^\text{gen} =  - K^V(K^\omega)^{-1} K^\text{droop, I} \eta = - k_2 K^V(K^\omega)^{-1} K^\text{droop, I} 1_n$ and $\hat{\omega}=0_n$, premultiplying the first $n$ rows of the equilibrium of \eqref{eq:cl_dynamics_vec_delta_int_coordinated_double_prime} with $M^{-1}$, and adding to the $(n+1)$th to $2n$th rows premultiplied with $V^\text{nom}E^{-1}$ yields
\begin{align*}
- V^\text{nom} \mathcal{L}_R \hat{V} - k_2 K^V(K^\omega)^{-1} K^\text{droop, I} 1_n = P^m.
\end{align*}
Premultiplying the above equation with $1^T_n$ yields $k_2 = - \sum_{i=1}^n P^m_i \sum_{i=1}^n \frac{K^\omega_i}{K^V_i K^\text{droop}_i}$. Additionally, $\mathcal{L}_R \hat{V}$ is uniquely determined. 
Now consider \eqref{eq:hvdc_coordinated_voltage_objective}. Note that $P_i^\text{inj}$ and hence $I_i^\text{inj}$, are uniquely determined by \eqref{eq:hvdc_coordinated_generation_objective}. By the equilibrium of \eqref{eq:hvdc_coordinated_voltage}, $\mathcal{L}_R \hat{V} = I^\text{inj}$, where $I^\text{inj} = [I^\text{inj}_1, \dots, I^\text{inj}_n]^T$. Thus, the KKT condition of \eqref{eq:hvdc_coordinated_voltage_objective} is 
\begin{align}
F^V \hat{V} =  \mathcal{L}_R r,
\label{eq:hvdc_coordinated_KKT_voltage_raw}
\end{align}
where $r\in \mathbb{R}^n$. 
Since $\mathcal{L}_R \hat{V}$ is uniquely determined, we premultiply \eqref{eq:hvdc_coordinated_KKT_voltage_raw} with $1_n^T$ and obtain the equivalent condition
\begin{align}
1^T_n F^V \hat{V} = 0.
\label{eq:hvdc_coordinated_KKT_voltage}
\end{align}
Since $F^V = K^V$, \eqref{eq:hvdc_coordinated_voltage_equilibrium} and \eqref{eq:hvdc_coordinated_KKT_voltage} are equivalent. Hence \eqref{eq:hvdc_coordinated_voltage_objective} is minimized, so Objective~\ref{obj:1_hvdc_coordinated} is satisfied. 
\end{proof} 
\begin{remark}
Corollary \ref{cor:hvdc_coordinated_equilibrium} provides insight in choosing the controller gains of \eqref{eq:hvdc_coordinated_droop_control_secondary_distributed} and \eqref{eq:voltage_control_secondary}, to satisfy Objective~\ref{obj:1_hvdc_coordinated}. 
\end{remark}

While the generation controller \eqref{eq:hvdc_coordinated_droop_control_secondary_distributed} and the converter controller \eqref{eq:voltage_control_secondary} offer good performance in terms of satisfying Objective~\ref{obj:1_hvdc_coordinated}, it may not be possible to implement these distributed controllers, e.g., due to lack of communication infrastructure. For such MTDC systems where appropriate communication is lacking, it may be desirable to instead implement decentralized generation and converter controllers. In other situations it might be possible to implement the distributed generation controller \eqref{eq:hvdc_coordinated_droop_control_secondary_distributed}, while it is more desirable to have the HVDC converters operating independently with decentralized controllers.
 In the following corollary, we show that the decentralized generation and converter controllers \eqref{eq:hvdc_coordinated_droop_control_secondary_distributed_decentralized_version} and \eqref{eq:voltage_control_secondary_decentralized_version} also globally asymptotically stabilize the combined MTDC and AC system. 
\begin{corollary}
\label{cor:hvdc_coordinated_stability_decentralized}
Let Assumption~\ref{ass:L_phi_coordinated} hold and let $\gamma$, $k_\phi$ be given such that Assumption~\ref{ass:gamma_coordinated} holds. 
Consider the dynamics of the MTDC dynamics \eqref{eq:hvdc_coordinated_voltage} and the AC dynamics \eqref{eq:frequency} with the generation controller \eqref{eq:hvdc_coordinated_droop_control_secondary_distributed_decentralized_version}
or \eqref{eq:hvdc_coordinated_droop_control_secondary_distributed}, respectively, 
and the converter controller
\eqref{eq:voltage_control_secondary_decentralized_version}. 
 The equilibria of the resulting closed-loop systems are globally asymptotically stable. 
\end{corollary}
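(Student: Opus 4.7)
The plan is to realise each decentralised scenario as a parametric specialisation of the distributed closed-loop system \eqref{eq:cl_dynamics_vec_delta_int_coordinated_double_prime} whose global asymptotic stability is established in Theorem~\ref{th:stability_A_coordinated}. The decentralised converter controller \eqref{eq:voltage_control_secondary_decentralized_version} is recovered from \eqref{eq:voltage_control_secondary} by setting every communication weight $c^\phi_{ij}=0$, i.e.\ $\mathcal{L}_\phi = 0$; Assumption~\ref{ass:L_phi_coordinated} then holds with $k_\phi = 0$, and Assumption~\ref{ass:gamma_coordinated} reduces to the vacuous requirement $\gamma>0$. Similarly, the decentralised generation controller \eqref{eq:hvdc_coordinated_droop_control_secondary_distributed_decentralized_version} is recovered from \eqref{eq:hvdc_coordinated_droop_control_secondary_distributed} by taking $K^{\text{droop, I}}=0$, after which $\eta$ no longer enters $P^\text{gen}$ and its own dynamics collapse to $\dot\eta = -\mathcal{L}_\eta \eta$.

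Under either (or both) of these specialisations the affected auxiliary variables become dynamically decoupled from the $(\hat\omega,\hat V)$ subsystem: the $\mathcal{L}_\phi S\phi''$ terms in the $\dot{\hat\omega}$ and $\dot{\hat V}$ equations of \eqref{eq:cl_dynamics_vec_delta_int_coordinated_double_prime} vanish when $\mathcal{L}_\phi=0$, and the $K^V(K^\omega)^{-1}K^{\text{droop, I}}\eta$ contribution to $\dot{\hat\omega}$ vanishes when $K^{\text{droop, I}}=0$. What is left for the $(\hat\omega,\hat V)$ coordinates is exactly \eqref{eq:cl_dynamics_vec_delta_int_coordinated_double_prime} with the corresponding coupling coefficients set to zero, so the Lyapunov argument underlying Theorem~\ref{th:stability_A_coordinated} (inherited via Theorem~\ref{th:hvdc_coordinated_voltage_control_secondary_network_stability}) still yields a valid certificate---removing a non-negative coupling contribution can only strengthen, not destroy, the inequality $\dot W\le 0$. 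Global asymptotic stability of the reduced $(\hat\omega,\hat V)$ dynamics therefore follows immediately, and a standard cascade argument handles the decoupled auxiliary variables: $\dot\phi'' = S^T(K^V)^{-1}K^\omega \hat\omega - \gamma\phi''$ with $\gamma>0$ is exponentially stable and driven by the asymptotically vanishing input $\hat\omega(t)\to 0$, so $\phi''\to 0$, while the Laplacian consensus system $\dot\eta = -\mathcal{L}_\eta \eta$ drives $\eta$ to some element of $\operatorname{span}(1_n)$ determined by the initial data---an admissible equilibrium of the reduced closed loop.

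The main obstacle is purely a matter of bookkeeping: one must verify that the specific Lyapunov function used in the proof of Theorem~\ref{th:hvdc_coordinated_voltage_control_secondary_network_stability} remains positive definite and has a non-positive derivative on the reduced state space after the limits $\mathcal{L}_\phi\to 0$ and/or $K^{\text{droop, I}}\to 0$ are taken. Since those parameters are expected to enter the Lyapunov derivative only through additively separable, sign-definite contributions, the argument carries through without modification, and global asymptotic stability is preserved in both decentralised variants.
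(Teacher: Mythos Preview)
Your proposal is correct and aligns with the paper's own one-line proof, which simply says the argument ``is in line with the proof of Theorem~\ref{th:hvdc_coordinated_voltage_control_secondary_network_stability}, where we discard the variables $\eta$ and $\phi$.'' Your parametric-specialisation framing (setting $\mathcal{L}_\phi=0$ and/or $K^{\text{droop,I}}=0$) lands on exactly the same reduced Lyapunov function and the same $\dot W\le 0$ computation; the only difference is that you retain $\phi''$ (and, in the fully decentralised case, $\eta$) as decoupled auxiliary states and then dispose of them by a cascade argument, whereas the paper drops them outright since the decentralised controllers \eqref{eq:hvdc_coordinated_droop_control_secondary_distributed_decentralized_version} and \eqref{eq:voltage_control_secondary_decentralized_version} do not contain those states at all---so that part of your argument is superfluous but harmless.
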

\begin{proof}
The proof is in line with the proof of Theorem~\ref{th:hvdc_coordinated_voltage_control_secondary_network_stability}, where we discard the variables $\eta$ and $\phi$. 
\end{proof}
While the optimality results of Corollary~\ref{cor:hvdc_coordinated_equilibrium} do not hold for any other controller combinations than \eqref{eq:hvdc_coordinated_droop_control_secondary_distributed} and \eqref{eq:voltage_control_secondary}, the following remark can be made about the average frequency errors. 
\begin{lemma}
Consider the dynamics of the MTDC dynamics \eqref{eq:hvdc_coordinated_voltage} and the AC dynamics \eqref{eq:frequency} with the generation controller \eqref{eq:hvdc_coordinated_droop_control_secondary_distributed} and the converter controller \eqref{eq:voltage_control_secondary}. Any equilibrium of the resulting closed-loop system satisfies $\sum_{i=1}^n K^{\text{droop,I}}_i (\omega_i - \omega^\text{ref}) = 0$, i.e., the average frequency errors are zero. 
\end{lemma}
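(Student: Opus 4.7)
The plan is to exploit the equilibrium condition $\dot{\eta} = 0$ from the integral part of the generation controller \eqref{eq:hvdc_coordinated_droop_control_secondary_distributed}, which in vector form reads
\begin{align*}
K^{\text{droop,I}}\hat{\omega} - \mathcal{L}_\eta \eta = 0_n.
\end{align*}

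First I would premultiply this equilibrium relation by $1_n^T$. The key observation is that $\mathcal{L}_\eta$ is the weighted Laplacian of the (undirected) communication graph with symmetric weights $c^\eta_{ij} = c^\eta_{ji} > 0$, so $1_n^T \mathcal{L}_\eta = 0_n^T$. Consequently the right-hand side vanishes, and one is left with
\begin{align*}
1_n^T K^{\text{droop,I}} \hat{\omega} = \sum_{i=1}^n K^{\text{droop,I}}_i (\omega_i - \omega^\text{ref}) = 0,
\end{align*}
which is exactly the claim.

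There is essentially no obstacle: the statement is a direct consequence of the distributed averaging structure of the integrator dynamics for $\eta$, together with the symmetry of the communication weights. The only mild subtlety is to note that the converter controller \eqref{eq:voltage_control_secondary} and the MTDC and AC dynamics are not needed for this step — only the $\eta$-equation at equilibrium is invoked — so the lemma holds for any equilibrium of the closed-loop system, regardless of the specific values of $\gamma$, $k_\phi$, or $V^\text{nom}$ appearing elsewhere.
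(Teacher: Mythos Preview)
Your proof is correct and follows essentially the same approach as the paper: set $\dot{\eta}=0_n$ in the closed-loop dynamics, premultiply by $1_n^T$, and use $1_n^T\mathcal{L}_\eta = 0_n^T$ to obtain the claimed identity.
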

\begin{proof}
Consider the closed-loop dynamics \eqref{eq:cl_dynamics_vec_delta_int_coordinated}. Letting $\dot{\eta}=0_n$ and premultiplying this equation with $1_n^T$ yields
$0_n = 1_n^T K^{\text{droop,I}}\hat{\omega} - 1_n^T \mathcal{L}_\eta \eta = \sum_{i=1}^n K^{\text{droop,I}}_i (\omega_i - \omega^\text{ref})$.
\end{proof}
\section{Generalisation to AC generation network}
\label{sec:AC_network}
In this section we generalize the single-generator model of Section \ref{subsec:model} to an AC grid with arbitrary size.
\subsection{Objective}
 Consider the AC transmission grid connected to converter $i$, and suppose it consists of $n_i$ generator buses. Without loss of generality, we may assume that converter $i$ of the MTDC grid is connected to generator $i_1$ of the AC system $i$. Let $\delta_{i_k}$ be the phase angle of bus $i_k$. The dynamics of the power system are assumed to be given by the linearized swing equation \cite{machowski2008power}, where the voltages are assumed to be constant. As before, we consider the incremental states with respect to their reference values:
\begin{align}
 \dot{\delta}_{i_k} &= \hat{\omega}_{i_k} \nonumber \\
m_{i_k}\dot{\hat{\omega}}_{i_k} &= - (K^{\text{droop}}_{i_k} + K^\omega_{i_k}) \hat{\omega}_{i_k}  -\sum_{j\in \mathcal{N}_{i_k}} k_{{i_k}j}(\delta_{i_k} - \delta_j) \nonumber \\
& \;\;\;\; + P^\text{gen}_{i_k} + P_{i_k}^{{m}} - P_{i_k}^{\text{inj}},
\label{eq:hvdc_coordinated_swing_scalar}
\end{align}
where ${\delta}_{i_k}$ is the phase angle and $\hat{\omega}_{i_k}={\omega}_{i_k}-\omega^{\text{ref}}$ is the incremental frequency at bus $i_k$,  $m_{i_k}>0$ is the inertia of bus $i_k$, $k_{i_kj} = |V_{i_k}||V_j|b_{i_kj}$, where $V_i$ is the constant voltage of bus $i$, and $b_{i_kj}$ is the susceptance of the power line $(i_k,j)$. Moreover $K^{\text{droop}}_{i_k} =0$ for $k\ne 1$, since power injection through the HVDC converter only takes place at bus  $i_1$. 
The constant $P^\text{gen}_{i_k}$ is the generated power by the generation control, $P^m_{i_k}$ is the uncontrolled deviation from the nominal generated power at generator ${i_k}$, respectively. The variable $P_{i_k}^{\text{inj}}=0$ for $k\ne 1$ is the power injected to the DC system through converter ${i}$. 
We assume that the AC voltages are constant, thus implying that $k_{ij}$ is constant.  
In order to account for the additional generators, we need to slightly modify Objective~\ref{obj:1_hvdc_coordinated}.
\begin{objective}
\label{obj:1_hvdc_coordinated_ac_network}
The frequency deviations converge to zero, i.e.,
\begin{align}
\lim_{t\rightarrow \infty} \omega_{i_k}(t)-\omega^{\text{ref}} = 0  \quad k=1, \dots n_i, \; i = 1, \dots, n. \label{eq:hvdc_coordinated_frequency_objective_ac_network}
\end{align}
The total cost of the power generation is minimized asymptotically, i.e., $\lim_{t\rightarrow \infty} P_i^\text{gen} = P_i^{\text{gen}*}, i=1, \dots, n$, where
\begin{align}
[P_1^{\text{gen}*}, \dots, P_n^{\text{gen}*}]  =  \argmin_{P_1, \dots, P_n} \frac 12 \sum_{i=1}^n P_i^T f^P_i P_i \label{eq:hvdc_coordinated_generation_objective_ac_network} 
\end{align}
subject to $1^T_{n_i}(P^\text{gen}_i + P_i^{{m}} - P_i^{\text{inj}}) = 0, i=1, \dots, n$ and $\sum_{i=1}^n P_{i_1}^\text{inj} = 0$, i.e., power balance both in the AC grids and in the MTDC grid. Here $P^\text{gen}_{i} = [P^\text{gen}_{i_1}, \dots, P^\text{gen}_{i_{n_i}}]^T, \; i=1, \dots, n$. 
Finally, the DC voltages are such that a quadratic cost function of the voltage deviations is minimized asymptotically, i.e., $\lim_{t\rightarrow \infty} V_i = V_i^*,  i=1, \dots, n$, where
\begin{align}
 [V_1^*, \dots,  V_n^*] = \argmin_{V_1, \dots, V_n} \frac 12 \sum_{i=1}^n f^V_i (V_i - V_i^\text{ref})^2
 \label{eq:hvdc_coordinated_voltage_objective_ac_network}
\end{align}
subject to \eqref{eq:hvdc_coordinated_frequency_objective}--\eqref{eq:hvdc_coordinated_generation_objective}. 
Here $f^P_i$ and $f^V_i$ are positive constants. 
\end{objective}
\subsection{Controller structure}
\label{subsec:hvdc_coordinated_controller_structure}
In this section we generalize the distributed secondary frequency controller \eqref{eq:hvdc_coordinated_droop_control_secondary_distributed} and the converter controller \eqref{eq:voltage_control_secondary} to the full AC network. 
The distributed generation controllers of the AC network $i$ are in this case given by
\begin{align}
P^\text{gen}_{i_k} &=- K_{i_k}^{\text{droop}} \hat{\omega}_{i_k} -  \frac{K^V_{i}}{K^\omega_{i_1}} K^\text{droop, I}_{i_k} \eta_i, \; k=1, \dots, n_i \nonumber \\
\dot{\eta}_i &= \sum_{k=1}^{n_i} K_{i_k}^{\text{droop,I}}\hat{\omega}_{i_k} - \sum_{j\in \mathcal{N}_i} c^\eta_{ij} (\eta_i-\eta_j).
\label{eq:hvdc_coordinated_droop_control_secondary_distributed_network}
\end{align}
where $K_{i_k}^\text{droop}$, $K_i^V$, $K_{i_1}^\omega$ and $K_i^\text{droop, I}$ are positive controller parameters, and $c^\eta_{ij} = c^\eta_{ji}>0$. Compare Equation~\eqref{eq:hvdc_coordinated_droop_control_secondary_distributed}.
The above controller can be interpreted as a distributed PI-controller, with a distributed consensus filter acting on the integral states $\eta_i$. 
The converter controller governing the power injections from bus $i_1$ of the AC system $i$ into the HVDC grid is given by
\begin{align}
\label{eq:hvdc_coordinated_voltage_control_secondary_network}
P_i^{\text{inj}} &= P_i^{\text{inj, nom}} + K_{i_1}^{{\omega}} (\omega_{i_1} - \omega^{\text{ref}}) + K_i^{{V}}(V_i^{\text{ref}}-V_i) \nonumber \\ 
& \;\;\;\;+ \sum_{j\in \mathcal{N}_i} c^\phi_{ij} (\phi_i - \phi_j) \nonumber \\
\dot{\phi}_i &= \frac{K^\omega_{i_1}}{K^V_i} (\omega_{i_1}-\omega^\text{ref}) - \gamma \phi_i, 
\end{align}
where $\gamma>0$ and $c^\phi_{ij} = c^\phi_{ji}>0$. Compare Equation~\eqref{eq:voltage_control_secondary}. 
In vector-form \eqref{eq:hvdc_coordinated_swing_scalar} becomes
\begin{align}
\dot{\delta}_i &=  \hat{\omega}_{i} \nonumber \\
\dot{\hat{\omega}}_{i} &= M_i \big(- (K^{\text{droop}}_{i} + K^\omega_{i}) \hat{\omega}_{i} -\mathcal{L}^\text{AC}_{i}\delta_i \nonumber \\
& \;\;\;\; + P^\text{gen}_{i} + P_{i}^{{m}} - P_{i}^{\text{inj}} \big), 
\label{eq:hvdc_coordinated_swing_vector}
\end{align}
where $\delta_i=[\delta_{i_1}, \dots, \delta_{i_{n_i}}]^T$, $\omega_i =[\omega_{i_1}, \dots, \omega_{i_{n_i}}]^T$, $M_i=\diag(m^{-1}_{i_1}, \dots, m^{-1}_{i_{n_i}})$, $\mathcal{L}^\text{AC}_i$ is the Laplacian matrix of the graph corresponding to the AC transmission system, with edge-weights given by $k_{i_kj}$, $K_i^\text{droop} = \diag(K_{i_1}^\text{droop}, \dots, K_{i_{n_i}}^\text{droop})$, $K^\omega_i=\diag(K^\omega_{i_1}, 0, \dots, 0)$, and $P^\text{gen}_{i} = [P^\text{gen}_{i_1}, \dots, P^\text{gen}_{i_{n_i}}]^T$, etc. Consider the output $y_i=\hat{\omega}_i$ of \eqref{eq:hvdc_coordinated_swing_vector}. With respect to $y_i=\hat{\omega}_i$, the dynamics have a marginally stable  unobservable mode. Thus, similar to Section~\ref{sec:secondary_frequency_control_stability} we consider the state transformation
\begin{align*}
\delta_i = \begin{bmatrix}
\frac{1}{\sqrt{i_n}} 1_{n} & S_i
\end{bmatrix}
\delta_i' \qquad
\delta'_i =
\begin{bmatrix}
\frac{1}{\sqrt{i_n}} 1_{n}^T  \\
S_i^T
\end{bmatrix}
\delta_i,
\end{align*}
where $S_i$ is an $i_n\times (i_n-1)$-matrix such that $[\frac{1}{\sqrt{i_n}} 1_{n}, S_i]$ is orthonormal. 
It can be shown that $\delta_{i_1}'$ is unobservable, and can be omitted by introducing the state $\delta_i'' = [\delta_{i_2}', \dots, \delta_{i_{n_i}}']^T$. This state-transformation results in the dynamics
\begin{align}
\dot{\delta}''_i &= 
S_i^T
\hat{\omega}_{i} \nonumber \\
\dot{\hat{\omega}}_{i} &= M_i \Big(- (K^{\text{droop}}_{i} + K^\omega_{i}) \hat{\omega}_{i} -\mathcal{L}^\text{AC}_{i} S_i
\delta_i''  \nonumber \\
& \;\;\;\; + P^\text{gen}_{i} + P_{i}^{{m}} - P_{i}^{\text{inj}} \Big).
\label{eq:hvdc_coordinated_swing_vector''}  
\end{align}
Since the input-output dynamics of \eqref{eq:hvdc_coordinated_swing_vector} and \eqref{eq:hvdc_coordinated_swing_vector''} are identical, we henceforth only consider the dynamics \eqref{eq:hvdc_coordinated_swing_vector''}. 
By combining the dynamics \eqref{eq:hvdc_coordinated_voltage}  and \eqref{eq:hvdc_coordinated_swing_vector''} with the controllers \eqref{eq:hvdc_coordinated_droop_control_secondary_distributed_network} and \eqref{eq:hvdc_coordinated_voltage_control_secondary_network}, and considering the change of coordinates \eqref{eq:transformation_phi} and $\phi''=[\phi_2', \dots, \phi_n']$ we obtain the dynamics
\begin{IEEEeqnarray}{lcl}
\dot{\delta}''_i &=& 
S_i^T
\hat{\omega}_{i}, \; i=1, \dots, n \nonumber \\
\dot{\hat{\omega}}_{i} &=& M_i \Big(- (K^{\text{droop}}_{i} + K^\omega_{i}) \hat{\omega}_{i} + e_1 K_i^V \hat{V}_i -\mathcal{L}^\text{AC}_{i} S_i
\delta_i''  \nonumber \\
&& - \frac{K^V_{i}}{K^\omega_{i_1}} K^\text{droop, I}_{i} 1_{n_i} \eta_i - e_1 e_i^T \mathcal{L}_\phi S \phi''  + P_{i}^{{m}}  \Big), \; i=1, \dots, n \nonumber \\
\dot{\hat{V}} &=& 
\frac{1}{V^{\text{nom}}}E\tilde{K}^\omega \tilde{\omega} -E\left(\mathcal{L}_R + \frac{K^V}{V^{\text{nom}}} \right) \hat{V} + \frac{1}{V^\text{nom}} E \mathcal{L}_\phi S \phi'' \nonumber \\
\dot{\eta} &=& \sum_{i=1}^n e_i 1_n^T K_{i}^{\text{droop,I}}\hat{\omega}_{i} - \mathcal{L}_\eta \eta \nonumber \\
\dot{\phi}'' &=&  S^T(K^V)^{-1}\tilde{K}^\omega \tilde{\omega}  -\gamma I_{n-1} \phi'',
\label{eq:hvdc_coordinated_voltage_control_secondary_network_cl_dynamics}
\end{IEEEeqnarray}
where $\tilde{\omega}=[\hat{\omega}_{1_1}, \dots, \hat{\omega}_{n_1}]^T$, $K_{i}^{\text{droop}} = \diag( K_{i_1}^{\text{droop}}, \dots,  K_{i_{n_i}}^{\text{droop}})$, $ K_{i}^{\text{droop, I}} = \diag( K_{i_1}^{\text{droop, I}}, \dots,  K_{i_{n_i}}^{\text{droop, I}})$, $\tilde{K}^\omega=\diag(K^\omega_{1_1}, \dots, K^\omega_{n_1})$. 
\begin{theorem}
The equilibrium of the dynamics \eqref{eq:hvdc_coordinated_voltage_control_secondary_network_cl_dynamics} is globally asymptotically stable under Assumptions \ref{ass:L_phi_coordinated} and \ref{ass:gamma_coordinated}. 
\label{th:hvdc_coordinated_voltage_control_secondary_network_stability}
\end{theorem}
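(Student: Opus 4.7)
The plan is to treat the dynamics \eqref{eq:hvdc_coordinated_voltage_control_secondary_network_cl_dynamics} as a linear time-invariant system and exhibit a positive definite quadratic Lyapunov function whose derivative is negative semidefinite, then conclude asymptotic stability of the unique equilibrium via LaSalle's invariance principle. Since the system is LTI, this immediately yields the global statement.

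The candidate I would use is the block-diagonal energy
\begin{align*}
W &= \tfrac12 \sum_{i=1}^n \Bigl( \hat{\omega}_i^T M_i^{-1} \hat{\omega}_i + (\delta_i'')^T S_i^T \mathcal{L}_i^{\text{AC}} S_i \delta_i'' \Bigr) \\
&\quad + \tfrac{V^\text{nom}}{2} \hat{V}^T E^{-1} \hat{V} + \tfrac12 \eta^T W_\eta \eta + \tfrac{V^\text{nom}}{2k_\phi} (\phi'')^T \phi'',
\end{align*}
where $W_\eta = \diag(K^V_1/K^\omega_{1_1},\ldots,K^V_n/K^\omega_{n_1}) \succ 0$ is chosen so that the coupling between $\hat{\omega}_i$ and $\eta_i$ in the two corresponding equations cancels in $\dot W$. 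Each block is positive definite: the kinetic term by $M_i^{-1}\succ 0$; the AC potential term because $\delta_i''$ lies in the range of $S_i$, which is orthogonal to $\ker\mathcal{L}_i^{\text{AC}}=\mathrm{span}(1_{n_i})$; and the remaining blocks by positivity of their scalar weights. The weight $V^\text{nom} E^{-1}$ on $\hat V$ is selected so that the cross-term $K^V_i\hat{\omega}_{i_1}\hat V_i$ produced by $\dot{\hat\omega}_i$ cancels the term $K^\omega_{i_1}\hat{\omega}_{i_1}\hat V_i$ produced by $\dot{\hat V}$, and the weight $V^\text{nom}/k_\phi$ on $\phi''$ is chosen so that, after invoking Assumption~\ref{ass:L_phi_coordinated} to replace $\mathcal{L}_\phi$ by $k_\phi\mathcal{L}_R$, the cross-terms between $\tilde\omega$ and $\phi''$ also cancel.

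Computing $\dot W$ along \eqref{eq:hvdc_coordinated_voltage_control_secondary_network_cl_dynamics}, the surviving dissipative terms are $-\sum_i \hat{\omega}_i^T (K_i^\text{droop} + K_i^\omega)\hat{\omega}_i$, $-V^\text{nom}\hat V^T\mathcal{L}_R \hat V - \hat V^T K^V \hat V$, $-\eta^T W_\eta \mathcal{L}_\eta \eta$, and $-(\gamma V^\text{nom}/k_\phi)\|\phi''\|^2$. The cross-terms between $\hat{\omega}_i$ and $\delta''_i$ cancel because $\dot{\delta}''_i = S_i^T \hat{\omega}_i$ combined with the identity $\mathcal{L}_i^\text{AC} S_i S_i^T = \mathcal{L}_i^\text{AC}$. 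The only remaining indefinite contribution is of the form $(V^\text{nom}/k_\phi)(\phi'')^T S^T \mathcal{L}_\phi \hat V$; using Assumption~\ref{ass:L_phi_coordinated} and Young's inequality this is dominated by $(k_\phi/(4V^\text{nom}))\|\phi''\|^2$ plus a term absorbable into the resistive dissipation $-V^\text{nom}\hat V^T \mathcal{L}_R \hat V$. Assumption~\ref{ass:gamma_coordinated}, $\gamma > k_\phi/(4V^\text{nom})$, is exactly the margin required to preserve strict negativity in $\|\phi''\|^2$. Hence $\dot W \le 0$, with equality forcing $\hat\omega=0$, $\hat V=0$ and $\phi''=0$. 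On this set, substituting back into the equilibrium conditions and premultiplying the $\dot\eta$ and $\dot{\hat\omega}$ equations by $1^T$ as in the proof of Corollary~\ref{cor:hvdc_coordinated_equilibrium} yields $\eta\in\ker\mathcal{L}_\eta$, then $\eta=0$, and finally $\delta_i''=0$, so LaSalle collapses the invariant set to the origin.

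The main obstacle I expect is the careful bookkeeping of cross-terms: verifying that the weights $V^\text{nom} E^{-1}$, $V^\text{nom}/k_\phi$ and $W_\eta$ simultaneously cancel the couplings between $\tilde\omega$ and $\hat V$, between $\hat\omega$ and $\eta$, between $\tilde\omega$ and $\phi''$, and between $\hat{\omega}_i$ and $\delta''_i$, leaving only one indefinite term controllable by Young's inequality. Assumption~\ref{ass:L_phi_coordinated} is essential at this step because, without $\mathcal{L}_\phi$ being a scalar multiple of $\mathcal{L}_R$, the residual cross-term between $\phi''$ and $\hat V$ cannot be absorbed by the available resistive dissipation, and Assumption~\ref{ass:gamma_coordinated} is precisely the damping threshold extracted from that Young-type bound.
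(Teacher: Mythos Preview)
Your overall plan is the right one and mirrors the paper's approach (energy-type quadratic Lyapunov function plus LaSalle), but the specific weights you propose do \emph{not} produce the cancellations you claim, so as written the argument breaks down in several places.

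First, the $\tilde\omega$--$\hat V$ cross term does not cancel. From $\hat\omega_i^T M_i^{-1}\dot{\hat\omega}_i$ you get $+K^V_i\hat\omega_{i_1}\hat V_i$, and from $V^{\text{nom}}\hat V^T E^{-1}\dot{\hat V}$ you get $+K^\omega_{i_1}\hat\omega_{i_1}\hat V_i$; both signs are positive and the coefficients $K^V_i$ and $K^\omega_{i_1}$ are in general different. The paper does not try to cancel this cross term at all; instead it weights the $\hat\omega_i$ and $\delta_i''$ blocks by $K^\omega_{i_1}/K^V_i$, collects the resulting $2\tilde\omega^T\tilde K^\omega\hat V$ into a $2n\times 2n$ quadratic form $Q_1$, and shows $Q_1\succ 0$ by the Schur complement.

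Second, the $\tilde\omega$--$\phi''$ cross term does not cancel with your choice $\tfrac{V^{\text{nom}}}{2k_\phi}\|\phi''\|^2$: the $\dot{\hat\omega}$ contribution is $-\tilde\omega^T\mathcal L_\phi S\phi''$, while the $\dot\phi''$ contribution is $\tfrac{V^{\text{nom}}}{k_\phi}\tilde\omega^T\tilde K^\omega(K^V)^{-1}S\phi''$; one carries the Laplacian $\mathcal L_\phi$ and the other a diagonal matrix, so no choice of scalar weight makes them match. The paper instead uses the weighted term $\tfrac12(\phi'')^TS^T\mathcal L_\phi S\phi''$; combined with the $K^\omega_{i_1}/K^V_i$ weight on $\hat\omega_i$ and the identity $\mathcal L_\phi SS^T=\mathcal L_\phi$, this produces exact cancellation of the $\tilde\omega$--$\phi''$ coupling.

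Third, moving the weight $W_\eta=\diag(K^V_i/K^\omega_{i_1})$ onto $\eta$ leaves you with the dissipation $-\eta^T W_\eta\mathcal L_\eta\eta$, whose symmetric part $\tfrac12(W_\eta\mathcal L_\eta+\mathcal L_\eta W_\eta)$ can be indefinite (take $n=2$, $W_\eta=\diag(1,10)$, $\mathcal L_\eta=\bigl[\begin{smallmatrix}1&-1\\-1&1\end{smallmatrix}\bigr]$). The paper avoids this by putting the heterogeneous weight on $\hat\omega_i$ and leaving the $\eta$ block as $\tfrac12\eta^T\eta$, so the dissipation is the clean $-\eta^T\mathcal L_\eta\eta\le 0$.

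In short, the right Lyapunov function is
\[
W=\sum_{i=1}^n\frac{K^\omega_{i_1}}{2K^V_i}\Bigl((\delta_i'')^TS_i^T\mathcal L_i^{\text{AC}}S_i\delta_i''+\hat\omega_i^TM_i^{-1}\hat\omega_i\Bigr)+\frac{V^{\text{nom}}}{2}\hat V^TC\hat V+\frac12\eta^T\eta+\frac12(\phi'')^TS^T\mathcal L_\phi S\phi'',
\]
and the $\hat V$--$\phi''$ cross term that remains is handled not by Young's inequality but by assembling a second $2(n-1)\times 2(n-1)$ block $Q_2$ (in the coordinates $S^T\hat V$ and $\phi''$) and checking its Schur complement, which is precisely where Assumptions~\ref{ass:L_phi_coordinated} and~\ref{ass:gamma_coordinated} enter.
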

\begin{corollary}
\label{cor:hvdc_coordinated_equilibrium_ac_network}
Let Assumption~\ref{ass:L_phi_coordinated} hold and let $\gamma$, $k_\phi$ be given such that Assumption~\ref{ass:gamma_coordinated} holds. 
Let $K^V_i, K^\omega_i$ and $K^\text{droop}_i$ be such that $(F^P_i)^{-1} = K^V_i(K^\omega_{i_1})^{-1}K^\text{droop}_i, \; i=1, \dots, n$ and $F^V=K^V$, where $F^P = \diag(f^P_1, \dots, f^P_n)$ and $F^V = \diag(f^V_1, \dots, f^V_n)$. 
 Then Objective~\ref{obj:1_hvdc_coordinated_ac_network} is satisfied in the limit when $\norm{(K^\omega)^{-1} K^V}_\infty \rightarrow 0$, provided that the disturbance $P^m_i$ is constant. 
\end{corollary}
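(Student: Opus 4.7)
The plan is to adapt the proof of Corollary~\ref{cor:hvdc_coordinated_equilibrium} to the full AC network model, with one additional step to propagate frequency synchronization from the converter buses to all generators within each AC area. First I would invoke Theorem~\ref{th:hvdc_coordinated_voltage_control_secondary_network_stability} to establish global asymptotic convergence to the (necessarily unique) equilibrium of \eqref{eq:hvdc_coordinated_voltage_control_secondary_network_cl_dynamics}, and then analyze this equilibrium by setting every time derivative to zero. Setting $\dot{\phi}''=0_{n-1}$ gives $S^T(K^V)^{-1}\tilde{K}^\omega \tilde{\omega} = \gamma\phi''$; in the limit $\norm{(K^\omega)^{-1}K^V}_\infty \to 0$ this forces $S^T\tilde{\omega}=0_{n-1}$, so the converter-bus frequencies satisfy $\tilde{\omega}=k_1 1_n$ for some $k_1 \in \mathbb{R}$.

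The key additional step, relative to the single-generator case, is to synchronize all generator frequencies within each AC area. Setting $\dot{\delta}''_i = S_i^T\hat{\omega}_i = 0_{n_i-1}$ forces $\hat{\omega}_i$ to lie in the span of $1_{n_i}$, so $\hat{\omega}_i = k_{1,i} 1_{n_i}$ for each $i$; matching its first entry with $\hat{\omega}_{i_1} = k_1$ gives $k_{1,i}=k_1$ uniformly. Setting $\dot{\eta}=0_n$ and premultiplying by $1_n^T$ (using $1_n^T\mathcal{L}_\eta = 0_n^T$) yields $\sum_{i=1}^n 1_{n_i}^T K_i^{\text{droop,I}} \hat{\omega}_i = 0$; since all $K^{\text{droop,I}}_{i_k}$ are positive, this forces $k_1=0$, establishing \eqref{eq:hvdc_coordinated_frequency_objective_ac_network}. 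Substituting back into $\dot{\eta}=0_n$ gives $\mathcal{L}_\eta\eta = 0_n$, so $\eta = k_2 1_n$ for some $k_2 \in \mathbb{R}$.

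For the generation-cost optimality, substituting $\hat{\omega}_i = 0_{n_i}$ and $\eta_i = k_2$ into the controller \eqref{eq:hvdc_coordinated_droop_control_secondary_distributed_network} yields $P^\text{gen}_i = -k_2 \frac{K^V_i}{K^\omega_{i_1}} K^\text{droop,I}_i 1_{n_i}$. The KKT conditions of \eqref{eq:hvdc_coordinated_generation_objective_ac_network} under the per-area power balance constraints $1^T_{n_i}(P_i^\text{gen}+P_i^m-P_i^\text{inj})=0$ together with $\sum_i P_{i_1}^\text{inj}=0$ take the form $F^P_i P^\text{gen}_i = \lambda 1_{n_i}$ with a single common multiplier $\lambda$; the hypothesis $(F^P_i)^{-1} = K^V_i(K^\omega_{i_1})^{-1} K^\text{droop,I}_i$ then identifies $\lambda=-k_2$, verifying that \eqref{eq:hvdc_coordinated_generation_objective_ac_network} is minimized. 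The explicit value of $k_2$ can be pinned down by weighting the equilibrium form of the $\hat{\omega}$-equations by $M_i^{-1}$ and adding the voltage equation weighted by $V^\text{nom}E^{-1}$ and summing, exactly as in Corollary~\ref{cor:hvdc_coordinated_equilibrium}; this also shows that $\mathcal{L}_R\hat{V}$ is uniquely determined.

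Finally, for the DC-voltage objective, setting $\dot{\hat{V}}=0_n$, using $\hat{\omega}=0$ and invertibility of $E$, then premultiplying by $1_n^T$ yields $1_n^T K^V \hat{V} = 0$. The KKT condition of \eqref{eq:hvdc_coordinated_voltage_objective_ac_network}, with $\mathcal{L}_R\hat{V}$ fixed by the previously minimized generation, is $F^V\hat{V} = \mathcal{L}_R r$; premultiplying by $1_n^T$ reduces this to $1_n^T F^V \hat{V}=0$, which under the hypothesis $F^V=K^V$ coincides with the equilibrium condition just derived. The main obstacle compared to Corollary~\ref{cor:hvdc_coordinated_equilibrium} is precisely the intra-area synchronization bookkeeping: the distributed integral state $\eta$ and the converter-controller variables $\phi''$ only see the converter-bus frequency $\tilde{\omega}$, so one must explicitly exploit $\dot{\delta}''_i = S_i^T\hat{\omega}_i = 0$ to propagate the conclusion $k_1=0$ from the converter buses to all generators in every AC area before the averaging argument on $\eta$ can be closed.
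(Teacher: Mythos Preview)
Your proposal is correct and follows essentially the same approach as the paper: both proofs set the time derivatives in \eqref{eq:hvdc_coordinated_voltage_control_secondary_network_cl_dynamics} to zero, use $\dot{\delta}''_i=0$ to force intra-area synchronization $\hat{\omega}_i=k_i 1_{n_i}$, use $\dot{\phi}''=0$ in the limit to equalize the $k_i$, use $1_n^T$ on $\dot{\eta}=0$ to conclude $k=0$, and then identify the equilibrium generation and voltage conditions with the KKT conditions of \eqref{eq:hvdc_coordinated_generation_objective_ac_network} and \eqref{eq:hvdc_coordinated_voltage_objective_ac_network} exactly as in Corollary~\ref{cor:hvdc_coordinated_equilibrium}. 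Your write-up is in fact somewhat more explicit than the paper's (which handles the $\delta''$ and $\phi''$ steps in the reverse order and defers the KKT verification to the proof of Corollary~\ref{cor:hvdc_coordinated_equilibrium}), but the argument is the same.
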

\begin{proof}
Consider  \eqref{eq:hvdc_coordinated_voltage_control_secondary_network_cl_dynamics}. Letting $\delta_i''=0_{n_i}\;i=1,\dots,n$, yields $\hat{\omega}_i = k_i 1_{n_i}\; i=1,\dots,n$. 
 Letting $\dot{\phi}''=0_n$ yields
\begin{align*}
S^T(K^V)^{-1}\tilde{K}^\omega \tilde{\omega}  -\gamma I_{n-1} = 0_n . 
\end{align*}
Now $\norm{(K^\omega)^{-1} K^V}_\infty \rightarrow 0$ in the above equation implies $S^T\tilde{\omega} = 0 \Leftrightarrow \tilde{\omega} = k 1_n, \; k\in \mathbb{R}$. This implies $\hat{\omega}_i = k 1_{n_i},  i=1,\dots,n$. Letting $\dot{\eta}=0_n$, inserting $\hat{\omega}_i = k 1_{n_i}\; i=1,\dots,n$, and premultiplying the equation with $1_n^T$ finally yields $k=0$, and thus $\hat{\omega}_i = 0_{n_i},  i=1,\dots,n$, i.e., \eqref{eq:hvdc_coordinated_frequency_objective_ac_network} is satisfied. 
Letting $\dot{\eta}=0_n$ and inserting $\hat{\omega}_i = 0_{n_i}\; i=1,\dots,n$ yields $\eta = k_1 1_n$, which inserted in \eqref{eq:hvdc_coordinated_droop_control_secondary_distributed_network} yields 
\begin{align}
P^\text{gen}_{i_k} &= k \frac{K^V_{i}}{K^\omega_{i_1}} K^\text{droop, I}_{i_k}, \; k=1, \dots, n_i. 
\label{eq:hvdc_coordinated_generation_ss_ac_network}
\end{align}
Finally we let $\dot{\hat{V}}=0_n$, insert $\tilde{\omega}=0_n$ and premultiply the equation with $1_n^TC$ and obtain 
\begin{align}
1_n^T K^V \hat{V} = 0.
\label{eq:hvdc_coordinated_voltages_ss_ac_network}
\end{align}
By similar arguments as in the proof of Corollary~\ref{cor:hvdc_coordinated_equilibrium}, we can show that \eqref{eq:hvdc_coordinated_generation_ss_ac_network} and \eqref{eq:hvdc_coordinated_voltages_ss_ac_network} are equivalent to the KKT conditions of \eqref{eq:hvdc_coordinated_generation_objective_ac_network}  and \eqref{eq:hvdc_coordinated_voltage_objective_ac_network}, respectively. This concludes the proof. 
\end{proof}
%
%
%
\section{Generalization to $\pi$-Link HVDC model}
\label{sec:PI-model}
In this section we extend the HVDC line model to consider the inductance and capacitance of the HVDC lines. We model the HVDC lines as series of $\ell$ $\pi$-links consisting of resistors, inductors, and capacitors. The dynamics of HVDC line $k$, connecting  converters $i$ and $j$, are given by
\begin{align}
C_i \dot{{V}}_i &= I_i^\text{inj} - \sum_{k\in \mathcal{N}_{i}^{\text{in}}} I_{k,1} \nonumber \\
L_k \dot{I}_{k,1} &= -R_k I_{k,1} + V_i - V_{k,1} \nonumber\\
C_k^{\text{line}} \dot{V}_{k,1} &= I_{k,1} - I_{k,2}\nonumber \\
L_k \dot{I}_{k,2} &= -R_k I_{k,2} + V_{k,1} - V_{k,2} \nonumber\\
&\;\;\vdots \nonumber\\
C_k^{\text{line}} \dot{V}_{k,\ell-1} &= I_{k,\ell-1} - I_{k,\ell} \nonumber \\
L_k \dot{I}_{k,\ell} &= -R_k I_{k,\ell} + V_{k,\ell-1} -V_j \nonumber\\
C_j \dot{\hat{V}}_j &= I^\text{inj}_j + \sum_{k\in \mathcal{N}_{i}^{\text{out}}} I_{k,\ell},
\label{eq:line_pi_scalar}
\end{align}
where $V_{k,q}$ and $I_{k,q}$ denote the DC voltage and current of line segment $q$, respectively, and $C^{\text{line}}_k$, $R_k$, and $L_k$ are the capacitance, resistance and inductance of each line segment of line $k$, respectively. The sets $\mathcal{N}_{i}^{\text{in}}$ and $\mathcal{N}_{i}^{\text{out}}$ denote the incoming and outgoing HVDC lines to converter $i$, respectively. 
%
%
%
%
%
To simplify the derivations we only consider AC areas consisting of single generators, but the results can be generalized to also include the AC generator network model of Section~\ref{sec:AC_network}. 
Combining the DC voltage dynamics \eqref{eq:line_pi_scalar}, the frequency dynamics \eqref{eq:frequency} with the generation control \eqref{eq:hvdc_coordinated_droop_control_secondary_distributed}, the converter controller \eqref{eq:voltage_control_secondary} with the power-current relationship \eqref{eq:power-current}, with $\phi''$ defined as in Section~\ref{sec:secondary_frequency_control} we obtain the closed-loop dynamics 
\begin{IEEEeqnarray}{rcl}
\dot{\hat{\omega}} &=& M \Big(- (K^{\text{droop}} + K^\omega) \hat{\omega} + K^V \hat{V} \nonumber \\
&& - {K^V}(K^\omega)^{-1} K^\text{droop, I} \eta - \mathcal{L}_\phi S \phi''  + P^{{m}}  \Big) \nonumber \\
\dot{\hat{V}} &=& 
 \frac{E}{V^\text{nom}} (- K^V \hat{V} -V^\text{nom} D_\text{in} I_1 + V^\text{nom} D_\text{out}I_\ell \nonumber \\
 && + {K}^\omega \hat{\omega} +  \mathcal{L}_\phi S \phi''  ) \nonumber \\
 \dot{I}_1 &=& L^{-1}(-RI_1+D_\text{in}^T \hat{V} - V_1) \nonumber\\
\dot{V}_1 &=&  E^{\text{line}}(I_1-I_2) \nonumber\\
\dot{I}_2 &=& L^{-1}(-RI_2 + V_1 - V_2) \nonumber\\
&\vdots& \nonumber\\
\dot{V}_{\ell-1} &=& E^\text{line} (I_{\ell-1} - I_\ell) \nonumber \\
\dot{I}_\ell &=& L^{-1}(-RI_\ell + V_{\ell-1} - D_\text{out} \hat{V}) \nonumber\\
\dot{\eta} &=& K^{\text{droop, I}}\hat{\omega} - \mathcal{L}_\eta \eta \nonumber \\
\dot{\phi}'' &=&  S^T(K^V)^{-1} {K}^\omega \hat{\omega}  -\gamma I_{n-1} \phi'',
\label{eq:hvdc_coordinated_voltage_control_pi_link_cl_dynamics}
\end{IEEEeqnarray}
 where 
$M$, $E$ $K^{\text{droop}}$, $K^\omega$, $K^V$, $K^\text{droop, I}$, $\mathcal{L}_\phi$, $\mathcal{L}_R$, $\mathcal{L}_\eta$, $P^m$, $\hat{\omega}$, $\hat{V}$ are defined as in Section~\ref{sec:secondary_frequency_control}, 
 $E^\text{line}=\diag((C^{\text{line}}_1)^{-1}, \dots, (C^{\text{line}}_m)^{-1})$, $L=\diag(L_1, \dots, L_m)$, and $R=\diag(R_1, \dots, R_m)$. The $n\times m$-matrices $D_\text{in}$ and $D_\text{out}$ describe the adjacency relations between the in- and outgoing HVDC lines and the converters. Element $(i,j)$ of $D_\text{in}$ or $D_\text{out}$ is $1$ if line $j$ originates or terminates at converter $i$, and $0$ otherwise.
The variables $I_q=[I_{1,q}-I_{1}^\text{nom}, \dots, I_{m,q}-I_{m}^\text{nom}]^T$ and $V_q=[V_{1,q}-V_{1,q}^\text{nom}, \dots, V_{m,q}-V_{m,q}^\text{nom}]^T$ define the incremental currents and voltages of the $q$:th line segment for all HVDC lines $1, \dots, m$. The constant $I_{k}^\text{nom}$ denotes the nominal current of each line segment of line $k$, and $V_{k,q}^\text{nom}$ denotes the nominal voltage of segment $q$ of line $k$. 
\begin{theorem}
The equilibrium of the dynamics \eqref{eq:hvdc_coordinated_voltage_control_pi_link_cl_dynamics} is globally asymptotically stable under Assumptions \ref{ass:L_phi_coordinated} and \ref{ass:gamma_coordinated}. 
\label{th:hvdc_coordinated_voltage_control_pi_link_stability}
\end{theorem}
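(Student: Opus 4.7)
The proof plan is to use a Lyapunov analysis that closely parallels that of Theorem \ref{th:hvdc_coordinated_voltage_control_secondary_network_stability}, augmented with energy terms associated with the inductors and intermediate capacitors of the $\pi$-link line model. The guiding physical observation is that each HVDC line, viewed as a network of passive RLC elements with ports at its two converters, is inherently dissipative: in steady state it behaves like an effective resistance, while during transients its stored magnetic and electrostatic energies are captured by standard quadratic forms. Thus the line dynamics add a passive subsystem to the overall energy balance whose only non-dissipative contribution is a boundary coupling at the converter terminals, which should cancel exactly against the coupling terms appearing in $\dot{\hat V}$.

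Concretely, I would propose the Lyapunov candidate
\begin{align*}
W &= \tfrac{1}{2}\hat\omega^T M^{-1}\hat\omega + \tfrac{V^\text{nom}}{2}\hat V^T E^{-1}\hat V + \tfrac{V^\text{nom}}{2}\sum_{q=1}^{\ell} I_q^T L\, I_q \\
&\quad + \tfrac{V^\text{nom}}{2}\sum_{q=1}^{\ell-1} V_q^T (E^\text{line})^{-1} V_q + \tfrac{1}{2}\eta^T P_\eta \eta + \tfrac{V^\text{nom}}{2}\phi''^T P_\phi \phi'',
\end{align*}
where the weights $P_\eta, P_\phi$ and the relative scaling $V^\text{nom}$ are chosen exactly as in the proof of Theorem \ref{th:hvdc_coordinated_voltage_control_secondary_network_stability}. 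Differentiating $W$ along \eqref{eq:hvdc_coordinated_voltage_control_pi_link_cl_dynamics} and using the line equations, the sum over $q$ telescopes and leaves, from the line contribution alone, only the resistive dissipation $-V^\text{nom}\sum_q I_q^T R\, I_q$ together with boundary terms of the form $V^\text{nom}\hat V^T(D_\text{in} I_1 - D_\text{out} I_\ell)$. These boundary terms cancel verbatim against the analogous cross terms produced by $V^\text{nom}\hat V^T E^{-1}\dot{\hat V}$.

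After this cancellation, the remainder of $\dot W$ has precisely the same algebraic structure as in the proof of Theorem \ref{th:hvdc_coordinated_voltage_control_secondary_network_stability}, with the resistive Laplacian contribution $\hat V^T \mathcal L_R \hat V$ replaced by the line-segment dissipation $\sum_q I_q^T R\, I_q$. The same manipulations as in that proof, together with Assumption \ref{ass:L_phi_coordinated} to align $\mathcal L_\phi$ with $\mathcal L_R$ and Assumption \ref{ass:gamma_coordinated} to dominate the $\phi''$ cross terms, then yield $\dot W \le 0$. Global asymptotic stability follows by LaSalle's invariance principle applied to the largest invariant set contained in $\{\dot W = 0\}$.

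The main obstacle I anticipate is the careful bookkeeping of the new states $I_q$ and $V_q$ in vector-matrix form with the incidence-like matrices $D_\text{in}$ and $D_\text{out}$, to verify that the telescoping cancellation between line and converter energies is indeed exact. A related subtlety arises in the LaSalle step: now $\dot W = 0$ forces $I_q = 0$ for all $q$ only through the resistive dissipation, and one must then propagate this through the algebraic chain obtained from $\dot I_q = 0$ and $\dot V_q = 0$ to conclude that all intermediate voltages agree with $\hat V$ at the respective endpoints, thereby reducing the equilibrium set to that of the simpler model already analyzed in Theorem \ref{th:hvdc_coordinated_voltage_control_secondary_network_stability}.
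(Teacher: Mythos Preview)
Your overall strategy---augment the Lyapunov function of Theorem~\ref{th:hvdc_coordinated_voltage_control_secondary_network_stability} with the stored line energies $\tfrac{V^\text{nom}}{2}\sum_q I_q^T L I_q$ and $\tfrac{V^\text{nom}}{2}\sum_q V_q^T C^{\text{line}} V_q$, exploit the telescoping so that only resistive dissipation and boundary terms survive, and let the boundary terms cancel against the $D_\text{in}I_1$, $D_\text{out}I_\ell$ contributions in $\dot{\hat V}$---is exactly the route the paper takes in its proof sketch.

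There is, however, a concrete slip in your candidate: the frequency term must carry the weight $K^\omega(K^V)^{-1}$, i.e., you need $\tfrac{1}{2}\hat\omega^T K^\omega(K^V)^{-1}M^{-1}\hat\omega$ rather than $\tfrac{1}{2}\hat\omega^T M^{-1}\hat\omega$. Without it the $\hat\omega$--$\eta$ cross term reads $-\hat\omega^T K^V(K^\omega)^{-1}K^{\text{droop,I}}\eta + \eta^T K^{\text{droop,I}}\hat\omega$, which does not vanish, and the $\hat\omega$--$\phi''$ cross term likewise survives. Compare the $\hat\omega$ block in the Lyapunov function of Theorem~\ref{th:hvdc_coordinated_voltage_control_secondary_network_stability} and in the paper's $W$ for the $\pi$-link case. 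Similarly, the $\phi''$ block should be $\tfrac{1}{2}\phi''^T S^T\mathcal L_\phi S\phi''$ without the extra $V^{\text{nom}}$ factor you wrote.

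A second point deserves more care than your phrase ``precisely the same algebraic structure'' suggests. In the proof of Theorem~\ref{th:hvdc_coordinated_voltage_control_secondary_network_stability}, the term $V^{\text{nom}}\hat V^T\mathcal L_R\hat V$ is the $(1,1)$ block of $Q_2$ and is what dominates the $\hat V$--$\phi''$ cross term $\hat V^T\mathcal L_\phi S\phi''$. In the $\pi$-link model that Laplacian dissipation in $\hat V$ is absent---the line dissipation now lives in the \emph{current} variables $I_q$, which are algebraically distinct from $\hat V$---so the $Q_2$ block cannot be reused verbatim. You will need to rebuild that estimate, and this is precisely where Assumptions~\ref{ass:L_phi_coordinated} and~\ref{ass:gamma_coordinated} enter. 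The paper itself gives only a sketch at this step, so expect to fill in the details yourself rather than invoke the earlier argument wholesale.
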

\section{Simulations}
In this section, simulations are conducted on a test system to validate the performance of the proposed controllers. The simulation was performed in Matlab, using a dynamic phasor approach based on \cite{demiray2008}. The test system is illustrated in Fig.~\ref{fig:TestGrid}.
\label{sec:simulations}
\begin{figure}[htb]	
	\centering
	\tikzsetnextfilename{IEEE14Grid6_MTDC}
	\begin{tikzpicture}[>=triangle 45,scale=0.23,font=\small ]
	\pgfdeclarelayer{background}
	\pgfdeclarelayer{foreground}
	\pgfsetlayers{background,main,foreground}
	\draw (6,14) node(bus1) [busbarX2,cap=round] {};
	\draw (9,7) node(bus2) [busbarX4] {};
	\draw (30.2,7) node(bus3) [busbarX3] {};
	\draw (31,10) node(bus4) [busbarX4] {};
	\draw (14,13) node(bus5) [busbarX4] {};
	\draw (15.6,17) node(bus6) [busbarX4] {};
	\draw (31.8,14) node(bus7) [busbarX1point] {};
	\draw (35,14) node(bus8) [busbarX1vert] {};
	\draw (31.8,18) node(bus9) [busbarX3] {};
	\draw (26,20) node(bus10) [busbarX3] {};
	\draw (21,21) node(bus11) [busbarX3] {};
	\draw (8,23) node(bus12) [busbarX3] {};
	\draw (14.75,23) node(bus13) [busbarX3] {};
	\draw (28,23) node(bus14) [busbarX3] {};
	\draw (4.1,15.8) node(vscAC) [vsc2,rotate=180] {};
	\draw[pin-,line width=0.8pt, cap=round](bus1.pin1) -- ++(0, 1);
	\draw[line width=2pt,Blue](3.25,16.2) -- ++(-1.2, 0);
	\draw (bus1.pin2) node(gen1) [generator] {};
	\draw (bus2.pin2) node(gen2) [generator] {};
	\draw (bus3.pin3) node(gen3) [generator_] {};
	\draw (11,14) node(gen4) [generator_liegend] {};
	\draw (35.5,14) node(gen5) [generator_] {};
	\draw[pin-,line width=0.8pt, cap=round] (bus6.pin1) -- ++(0, -1)-- (gen4.right);
	\draw[pin-,line width=0.8pt, cap=round] (bus8.pin1) -- ++(0.5, 0);
	\draw (bus2.pin2) node (load2) [load] {};
	\draw (bus3.pin2) node (load3) [load] {};
	\draw (bus4.pin4) node (load4) [load] {};
	\draw (bus5.pin3) node (load5) [load] {};
	\draw (bus6.pin4) node (load6) [load] {};
	\draw (bus9.pin3) node (load9) [load] {};
	\draw (bus10.pin2) node (load10) [load] {};
	\draw (bus11.pin2) node (load11) [load] {};
	\draw (bus12.pin1) node (load12) [load] {};
	\draw (bus13.pin3) node (load13) [load] {};
	\draw (bus14.pin2) node (load14) [load] {};
	\draw (14.8,15)node(trafo1) [trafo, scale=0.5,rotate=90] {};
	\draw (30.2,14)node(trafo2) [trafo, scale=0.5,rotate=90] {};
	\draw (31.8,12)node(trafo3) [trafo, scale=0.5,rotate=90] {};
	\draw (31.8,16)node(trafo4) [trafo, scale=0.5,rotate=90] {};
	\draw (33.5,14)node(trafo5) [trafo, scale=0.5] {};
	\draw[pin-pin] (bus1.pin1) -- ++(0, -1.5) --($(bus2.pin1)+(0, 1.5)$) -- (bus2.pin1);
	\draw[pin-pin] (bus1.pin2) -- ++(0, -1.5) --($(bus5.pin1)+(0, -1.5)$) -- (bus5.pin1);
	\draw[pin-pin] (bus2.pin4) -- ++(0, -1.5) --($(bus3.pin1)+(0, -1.5)$) -- (bus3.pin1);
	\draw[pin-pin] (bus2.pin4) -- ++(0, 1.5) --($(bus4.pin2)+(0, -1.5)$) -- (bus4.pin2);
	\draw[pin-pin] (bus2.pin3) -- ++(0, 1.5) --($(bus5.pin2)+(0, -1.5)$) -- (bus5.pin2);
	\draw[pin-pin] (bus3.pin3) -- ++(0, 1.5) --($(bus4.pin3)+(0, -1.5)$) -- (bus4.pin3);
	\draw[pin-pin] (bus4.pin1) -- ++(0, -1.0) -| (bus5.pin4);
	\draw[pin-] (bus5.pin3) -- (trafo1.left);
	\draw[pin-] (bus6.pin2) -- (trafo1.right);
	\draw[pin-] (bus4.pin2) -- (trafo2.left);
	\draw[pin-] (bus9.pin1) -- (trafo2.right);
	\draw[pin-] (bus4.pin3) -- (trafo3.left);
	\draw[pin-] (bus7.pin1) -- (trafo3.right);
	\draw[pin-] (bus7.pin1) -- (trafo4.left);
	\draw[pin-] (bus9.pin2) -- (trafo4.right);
	\draw[pin-] (bus7.pin1) -- (trafo5.left);
	\draw[pin-] (bus8.pin1) -- (trafo5.right);
	\draw[pin-pin] (bus6.pin3) -- ++(0, 1.5) -- ($(bus11.pin1)+(0, -1.5)$) --(bus11.pin1);
	\draw[pin-pin] (bus6.pin1) -- ++(0, 1.5) -- ($(bus12.pin2)+(0, -1.5)$) --(bus12.pin2);
	\draw[pin-pin] (bus6.pin2) -- (bus13.pin2);
	\draw[pin-pin] (bus6.pin1) -- ++(0, 1.5) -- ($(bus12.pin2)+(0, -1.5)$) --(bus12.pin2);
	\draw[pin-pin] (bus9.pin1) -- ++(0, 1) -- ($(bus10.pin3)+(0, -1)$) --(bus10.pin3);
	\draw[pin-pin] (bus9.pin3) -- ++(0, 2.5) -- ($(bus14.pin3)+(0, -1.5)$) --(bus14.pin3);
	\draw[pin-pin] (bus10.pin1) -- ++(0, -1.5) -- ($(bus11.pin3)+(0, -1.5)$) --(bus11.pin3);
	\draw[pin-pin] (bus12.pin3) -- ++(0, -1.5) -- ($(bus13.pin1)+(0, -1.5)$) --(bus13.pin1);
	\draw[pin-pin] (bus13.pin3) -- ++(0, 1) -- ($(bus14.pin1)+(0, 1)$) --(bus14.pin1);
	\draw[line width=1pt] (6,3.5)-- (33,3.5) -- (37,10.5) -- (37,17.5) -- (33,24.5)--(6,24.5) -- (2,17.5)--(2,10.5)--(6,3.5); 
	\draw[line width=0.8pt](	3	,	37.75	)--(	9.75	,	37.75	)--(	10.75	,	39.5	)--(	10.75	,	41.25	)--(	9.75	,	43	)--(	3	,	43	)--(	2	,	41.25	)--(	2	,	39.5	)--(	3	,	37.75	);  
	\draw[line width=0.8pt,fill=white](	16	,	37.75	)--(	22.75	,	37.75	)--(	23.75	,	39.5	)--(	23.75	,	41.25	)--(	22.75	,	43	)--(	16	,	43	)--(	15	,	41.25	)--(	15	,	39.5	)--(	16	,	37.75	);  
	\draw[line width=0.8pt,fill=white](3,25)--(	9.75	,	25	)--(	10.75	,	26.75	)--(	10.75	,	28.5	)--(	9.75	,	30.25	)--(	3	,	30.25	)--(	2	,	28.5	)--(	2	,	26.75	)--(	3	,	25	); 
	\draw[line width=.8pt,fill=white](16,25)--(22.75,25)--(23.75,26.75)--(23.75,28.5)--(22.75,30.25)--(16,30.25)--(15,28.5)--(15,26.75)--(16,25); 
	\draw[line width=0.8pt,fill=white](29.25,25)--(36,25)--(37,26.75)--(37,28.5)--(36,30.25)--(29.25,30.25)--(28.25,28.5)--(28.25,26.75)--(29.25,25); 
	\draw[line width=0.8pt,fill=white](	29.25	,	37.75	)--(	36	,	37.75	)--(	37	,	39.5	)--(	37	,	41.25	)--(	36	,	43	)--(	29.25	,	43	)--(	28.25	,	41.25	)--(	28.25	,	39.5	)--(	29.25	,	37.75	); 
	\draw (5.4,37) node(busDC1) [busbarDCX2,cap=round] {};
	\draw (19.5,37) node(busDC2) [busbarDCX5,cap=round] {};
	\draw (7,31) node(busDC3) [busbarDCX4,cap=round] {};
	\draw (19.5,31) node(busDC4) [busbarDCX3,cap=round] {};
	\draw (32,31) node(busDC5) [busbarDCX4,cap=round] {};
	\draw (33.6,37) node(busDC6) [busbarDCX2,cap=round] {};
	\draw (6.5,39.05) node(vsc1) [vsc2,rotate=-90] {};   
	\draw[pinDC-,Blue,line width=1pt] (busDC1.pin2) -- ++(0, 1.2);
	\draw[line width=1pt] ($(vsc1)+(0.8, 0.5)$) -- ++(1,0);
	\draw[line width=1pt,dotted] ($(vsc1)+(1.8, 0.5)$) -- ++(1.5,0);
	\draw (19.5,39.05) node(vsc2) [vsc2,rotate=-90] {};
	\draw[pinDC-,Blue,line width=1pt] (busDC2.pin3) -- ++(0, 1.2);
	\draw[line width=1pt] ($(vsc2)+(0.8, 0.5)$) -- ++(1,0);
	\draw[line width=1pt,dotted] ($(vsc2)+(1.8, 0.5)$) -- ++(1.5,0);
	\draw (6,28.95) node(vsc3) [vsc2,rotate=90] {};
	\draw[pinDC-,Blue,line width=1pt] (busDC3.pin2) -- ++(0, -1.2);
	\draw[line width=1pt] ($(vsc3)+(-0.8, -0.4)$) -- ++(-1,0);
	\draw[line width=1pt,dotted] ($(vsc3)+(-1.8, -0.4)$) -- ++(-1.5,0);
	\draw (19,28.95) node(vsc4) [vsc2,rotate=90] {};
	\draw[pinDC-,Blue,line width=1pt] (busDC4.pin2) -- ++(0, -1.2);
	\draw[line width=1pt] ($(vsc4)+(-0.8, -0.4)$) -- ++(-1,0);
	\draw[line width=1pt,dotted] ($(vsc4)+(-1.8, -0.4)$) -- ++(-1.5,0);
	\draw (32.25,28.95) node(vsc5) [vsc2,rotate=90] {};
	\draw[pinDC-,Blue,line width=1pt] (busDC5.pin3) -- ++(0, -1.2);
	\draw[line width=1pt] ($(vsc5)+(-0.8, -0.4)$) -- ++(-1,0);
	\draw[line width=1pt,dotted] ($(vsc5)+(-1.8, -0.4)$) -- ++(-1.5,0);
	\draw (32.75,39.05) node(vsc6) [vsc2,rotate=-90] {};
	\draw[pinDC-,Blue,line width=1pt] (busDC6.pin1) -- ++(0, 1.2);
	\draw[line width=1pt] ($(vsc6)+(0.8, 0.5)$) -- ++(1,0);
	\draw[line width=1pt,dotted] ($(vsc6)+(1.8, 0.5)$) -- ++(1.5,0);
	\draw[pinDC-pinDC,Blue,line width=2pt] (busDC1.pin2) -- ++(0, -1)-|(busDC2.pin1);
	\draw[pinDC-pinDC,Blue,line width=2pt] (busDC1.pin1) -|(busDC3.pin1);
	\draw[pinDC-pinDC,Blue,line width=2pt] (busDC2.pin2) -- ++(0, -1.5)--($(busDC3.pin2)+(0, 2.5)$)--(busDC3.pin2);
	\draw[pinDC-pinDC,Blue,line width=2pt] (busDC2.pin3) -|(busDC4.pin2);
	\draw[pinDC-pinDC,Blue,line width=2pt] (busDC2.pin4) -- ++(0, -1.5)--($(busDC5.pin3)+(0, 2.5)$)--(busDC5.pin3);
	\draw[pinDC-pinDC,Blue,line width=2pt] (busDC2.pin5) -- ++(0, -1)-|(busDC6.pin1);
	\draw[pinDC-pinDC,Blue,line width=2pt] (busDC3.pin4) -- ++(0, 1)-|(busDC4.pin1);
	\draw[pinDC-pinDC,Blue,line width=2pt] (busDC3.pin3) -- ++(0, 2)--($(busDC5.pin2)+(0, 2)$)--(busDC5.pin2);
	\draw[pinDC-pinDC,Blue,line width=2pt] (busDC4.pin3) -- ++(0, 1)--($(busDC5.pin1)+(0, 1)$)--(busDC5.pin1);
	\draw[pinDC-pinDC,Blue,line width=2pt] (busDC5.pin4) -- ++(0, 1.5)--($(busDC6.pin2)+(0, -1.5)$)--(busDC6.pin2);
	\draw[OliveGreen,line width=2pt,dashed] ($(busDC1.pin2)+(0.4, -0.1)$) -- ++(0, -0.5)-|($(busDC2.pin1)+(-0.4,-0.1)$);
	\draw[OliveGreen,line width=2pt,dashed] ($(busDC2.pin2)+(-0.4, -0.1)$) -- ++(0, -1)--($(busDC3.pin2)+(-0.4,3)$)--($(busDC3.pin2)+(-0.4,0.1)$);
	\draw[OliveGreen,line width=2pt,dashed] ($(busDC3.pin4)+(0.4, 0.1)$) -- ++(0, 0.5)-|($(busDC4.pin1)+(-0.4,0.1)$);
	\draw[OliveGreen,line width=2pt,dashed] ($(busDC4.pin3)+(0.4, 0.1)$) -- ++(0, 0.5)-|($(busDC5.pin1)+(-0.4,0.1)$);
	\draw[OliveGreen,line width=2pt,dashed] ($(busDC5.pin4)+(0.4, 0.1)$) -|($(busDC6.pin2)+(0.4,-0.1)$);
	{
		\draw (bus1.pin1) node[left] {1};
		\draw (bus2.pin1) node[left] {2};
		\draw (bus3.pin1) node[left] {3};
		\draw (bus4.pin1) node[above left] {4};
		\draw (bus5.pin1) node[left] {5};
		\draw (bus6.pin4) node[right] {6};1
		\draw (bus7.pin1) node[below right=0.1 and 0.1] {7};
		\draw (bus8.pin1) node[above right] {8};
		\draw (bus9.pin3) node[right] {3};
		\draw (bus10.pin1) node[above] {10};
		\draw (bus11.pin1) node[left] {11};
		\draw (bus12.pin3) node[right] {12};
		\draw (bus13.pin3) node[right] {13};
		\draw (bus14.pin3) node[right] {14};
		\node  (ACAreaName) at (22,12) {\large{AC area 4}};
		\node  (ACArea1Name) at (6.2,42) {AC Area 1};
		\node  (ACbus1Name) at (9,40.5) {bus 1};
		\node  (ACArea2Name) at (19.2,42) {AC Area 2};
		\node  (ACbus2Name) at (22,40.5) {bus 2};	
		\node  (ACArea3Name) at (6.2,26) {AC Area 3};
		\node  (ACbus3Name) at (3.55,27.5) {bus 4};		
		\node  (ACArea4Name) at (19.2,26) {AC Area 4};
		\node  (ACbus4Name) at (16.55,27.5) {bus 1};
		\node  (ACArea5Name) at (32.45,26) {AC Area 5};
		\node  (ACbus5Name) at (29.8,27.5) {bus 5};
		\node  (ACArea6Name) at (32.45,42) {AC Area 6};
		\node  (ACbus6Name) at (35.25,40.5) {bus 3};
		\node  (DCAreaName) at (18.6,33.85) {\large\color{Blue}{MTDC Grid}};								
	}
	 \begin{pgfonlayer}{background}
	 \shade[top color=white!90!black,bottom color=black!60!white]  (22.75,30.3) -- (33.13,24.5) -- (5.88,24.5) -- (16,30.3);
	  \end{pgfonlayer}
	\end{tikzpicture}
	\caption{Test grid consisting of 6 asynchronous IEEE 14 AC grids connected through an MTDC grid.}
	\label{fig:TestGrid}
	\vspace{-0.2cm}
\end{figure}
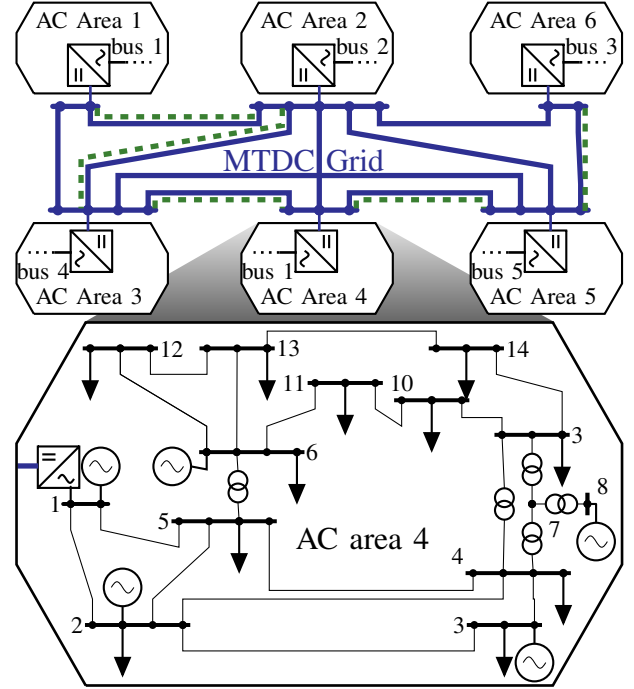%
We model the HVDC line as a single $\pi$-link, with parameters given in Table~\ref{tab:HVDCgridParameter}. The terminal capacitances  are $C_i=0.375\times 10^{-3}$ p.u. The AC grid parameters were obtained from \cite{Milano}. The generators are modeled as a $6$th order synchronous machine model controlled by an automatic voltage controller and a governor \cite{kundur1994power}. The consumers in all AC grids are modeled as constant power loads. It is assumed the linear power-current relation \eqref{eq:power-current_nonlinear} holds. 
\begin{table}[htb]
	\centering
	\caption{HVDC grid line parameters}
	\vspace{-0.2cm}
	\label{tab:HVDCgridParameter}
	\begin{tabular}{lllll} \toprule
		$(i,j)$&$R_{ij}$ [p.u.]&$L_{ij}$ [$10^{-3}$ p.u.]& $C_{ij}$ [p.u.]\\ \midrule
		(1, 2), (1, 3), (2,4), (3,4) & 0.0586 & 0.2560 &0.0085 \\
		(2, 3) & 0.0878 & 0.3840 &0.0127\\
		(2, 5), (4,5) & 0.0732 & 0.3200 &0.0106\\
		(2, 6), (3, 5), (5, 6) & 0.1464 & 0.6400 &0.0212\\
		\bottomrule
	\end{tabular}
\end{table}
\begin{table}[htb]
                \centering
				 \caption{Controller parameters}
			 	\vspace{-0.2cm}
                \label{tab:ControllerParameter}
                \begin{tabular}{@{}lllllll@{}}\toprule
                               $K^{\omega}_i$ &$K^{V}_i$&$K^\text{droop}_i$& $K^\text{droop, I}_i$ &$c^{\eta}_{ij}$ &$c^{\phi}_{ij}$ & $\gamma$ \\
                               \midrule
                               1501            &   80       & 9   & 3.35      &  ${5}/{R_{ij}}$ & ${15}/{R_{ij}}$  &  0\\
                               \bottomrule
                \end{tabular}
\end{table}

The three different controllers proposed in this paper are applied to the test grid, i.e.,  \eqref{eq:hvdc_coordinated_droop_control_secondary_distributed_decentralized_version} and \eqref{eq:voltage_control_secondary_decentralized_version}, \eqref{eq:hvdc_coordinated_droop_control_secondary_distributed} and \eqref{eq:voltage_control_secondary_decentralized_version}, and
\eqref{eq:hvdc_coordinated_droop_control_secondary_distributed}  and \eqref{eq:voltage_control_secondary}, with parameters given in Table~\ref{tab:ControllerParameter}.
The communication network of  \eqref{eq:hvdc_coordinated_droop_control_secondary_distributed} and \eqref{eq:voltage_control_secondary} is illustrated by the dashed lines in Fig.~\ref{fig:TestGrid}. 

We set $\gamma=0$, so Theorem~\ref{th:hvdc_coordinated_voltage_control_secondary_network_stability} does not guarantee stability of the equilibrium. However, the closed-loop system matrix can easily be verified to be Hurwitz. At time $t=1$ the output of  generator 2 in AC area 1 was reduced by $0.2$ p.u.
Fig.~\ref{fig:ACFrequency} shows the average frequencies of the AC grids for all three controllers. 
Immediately after the fault the frequencies of the AC area of the increased load drop. The frequency drop is followed by a DC voltage drop in all converter nodes, due to  \eqref{eq:voltage_control_secondary} or \eqref{eq:voltage_control_secondary_decentralized_version}.  
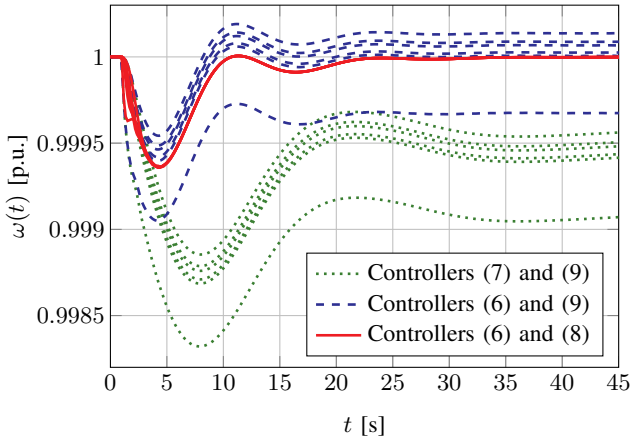
\begin{figure}[b]
	\centering
	\tikzsetnextfilename{Frequencies}
	\begin{tikzpicture}[font=\small]
	\begin{axis}
	[cycle list name=Necsys,
	xlabel={$t$ [s]},
	ylabel={$\omega(t)$ [p.u.]},
	xmin=0,
	xmax=45,
	xtick={0,5,...,45},
	ymin=0.9982,
	ymax=1.0003,
	ytick={0.9980,0.9985,0.9990,0.9995,1,1.0005},
	yticklabel style={/pgf/number format/.cd,
		fixed,
		precision=4},
	grid=major,
	height=6.4cm,
	width=0.96\columnwidth,
	legend cell align=left,
	legend pos= south east,
	legend entries={Controllers (7) and (9), Controllers (6) and (9), Controllers (6) and (8)
	},
	]
	\addlegendimage{no markers,OliveGreen, dotted, line width=1pt}
	\addlegendimage{no markers,Blue, dashed, line width=1pt}
	\addlegendimage{no markers,Red, line width=1pt}
	\foreach \x in {1,2,3,4,5,6}{
		\addplot+[dotted,OliveGreen] table[x index=0,y index=\x,col sep=tab]{PlotData_coordinated/ACC_Frequency.txt};
	}
	\pgfplotsset{cycle list shift=1};
	\foreach \x in {1,2,3,4,5,6}{
		\addplot+[dashed,Blue] table[x index=0,y index=\x,col sep=tab]{PlotData_coordinated/CDC_Frequency.txt};
	}	
	\pgfplotsset{cycle list shift=2};
	\foreach \x in {1,2,3,4,5,6}{
		\addplot+ [Red] table[x index=0,y index=\x,col sep=tab]{PlotData_coordinated/Necsys_Frequency.txt};
	}
	\end{axis}
	\end{tikzpicture}
	\caption{Average AC area frequencies. After a reduced generation of $0.2$ p.u. at $t=1$~s, the frequencies synchronize fast, and are subsequently restored to the nominal frequency when the distributed generator and converter controllers \eqref{eq:hvdc_coordinated_droop_control_secondary_distributed} and  \eqref{eq:voltage_control_secondary} are employed. For the controllers 
		\eqref{eq:hvdc_coordinated_droop_control_secondary_distributed} and 	\eqref{eq:voltage_control_secondary_decentralized_version} or  \eqref{eq:hvdc_coordinated_droop_control_secondary_distributed_decentralized_version} and \eqref{eq:voltage_control_secondary_decentralized_version}, a static control error remains present. }
	\label{fig:ACFrequency}
\end{figure}
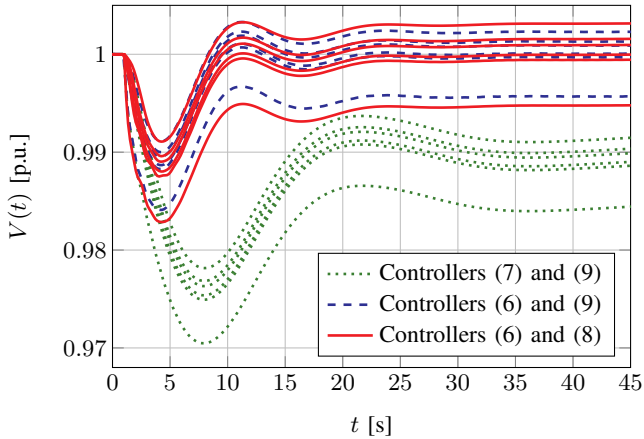
\begin{figure}[b!]
	\centering
	\tikzsetnextfilename{DCVoltages}
	\begin{tikzpicture}[font=\small]
	\begin{axis}[cycle list name=Necsys,
	xlabel={$t$ [s]},
	ylabel={$V(t)$ [p.u.]},
	xmin=0,
	xmax=45,
	xtick={0,5,...,45}, 
	ymin=0.968,
	ymax=1.005,
	yticklabel style={/pgf/number format/.cd,
		fixed,
		precision=3},
	grid=major,
	height=6.4cm,
	width=0.975\columnwidth,
	legend cell align=left,
	legend pos= south east,
	legend entries={Controllers (7) and (9), Controllers (6) and (9), Controllers (6) and (8)
	},
	]
	\addlegendimage{no markers,OliveGreen, dotted, line width=1pt}
	\addlegendimage{no markers,Blue, dashed, line width=1pt}
	\addlegendimage{no markers,Red, line width=1pt}
	\foreach \x in {1,2,3,4,5,6}{
		\addplot+[dotted,OliveGreen] table[x index=0,y index=\x,col sep=tab]{PlotData_coordinated/ACC_Voltage.txt};
	}
	\pgfplotsset{cycle list shift=1};
	\foreach \x in {1,2,3,4,5,6}{
		\addplot+[dashed,Blue] table[x index=0,y index=\x,col sep=tab]{PlotData_coordinated/CDC_Voltage.txt};
	}	
	\pgfplotsset{cycle list shift=2};
	\foreach \x in {1,2,3,4,5,6}{
		\addplot+ [Red] table[x index=0,y index=\x,col sep=tab]{PlotData_coordinated/Necsys_Voltage.txt};
	}
	\end{axis}
	\end{tikzpicture}
	\caption{DC converter voltages. After a reduced generation of $0.2$ p.u. at $t=1$~s, an immediate voltage drop is followed by a restoration of the average voltage errors to zero when the distributed generator controller \eqref{eq:hvdc_coordinated_droop_control_secondary_distributed} is employed together with either converter controller  \eqref{eq:voltage_control_secondary} or \eqref{eq:voltage_control_secondary_decentralized_version}. This corresponds to the minimization of \eqref{eq:hvdc_coordinated_voltage_objective}. However, when  \eqref{eq:hvdc_coordinated_droop_control_secondary_distributed_decentralized_version} is employed, the average voltage error does not converge to zero. }
	\label{fig:DCVoltage}
\end{figure}
We note that the frequencies are restored to the nominal frequency for \eqref{eq:hvdc_coordinated_droop_control_secondary_distributed}  and \eqref{eq:voltage_control_secondary} as predicted by \eqref{eq:hvdc_coordinated_frequency_objective_ac_network}. However, for the controller combination \eqref{eq:hvdc_coordinated_droop_control_secondary_distributed}  and \eqref{eq:voltage_control_secondary_decentralized_version} the frequencies are not restored to the nominal values, despite that a secondary frequency controller is employed. An intuitive explanation to this is that the distributed frequency controller \eqref{eq:hvdc_coordinated_droop_control_secondary_distributed} requires that the frequencies in the different areas synchronize. This synchronization is achieved by \eqref{eq:voltage_control_secondary}, but not by \eqref{eq:voltage_control_secondary_decentralized_version}. 

Fig.~\ref{fig:DCVoltage} shows the DC voltages of the terminals. A similar behavior as from the frequencies can be seen. As foreseen, the voltage deviations can not be controlled back to zero, since this would result in zero current flows. For the controllers \eqref{eq:hvdc_coordinated_droop_control_secondary_distributed}  and \eqref{eq:voltage_control_secondary}, the average voltage deviation is restored to zero as predicted by \eqref{eq:hvdc_coordinated_voltages_ss_ac_network}. 

Fig.~\ref{fig:Generators} shows the total increase of the generated power within each AC area. Initially all generators have a similar oscillating behavior, due to the AC voltage oscillations. 
For the controllers \eqref{eq:hvdc_coordinated_droop_control_secondary_distributed}  and \eqref{eq:voltage_control_secondary} the total increase in the generated power of the AC areas converge to the same value, thus minimizing \eqref{eq:hvdc_coordinated_generation_objective_ac_network}, since the controller parameters are chosen uniformly for all converters $i$. For the other two controller combinations, the increase in the generated power of the AC areas does not converge to the same value. This implies that \eqref{eq:hvdc_coordinated_generation_objective_ac_network} is not minimized, and we conclude that the simulations confirm that the controllers \eqref{eq:hvdc_coordinated_droop_control_secondary_distributed}  and \eqref{eq:voltage_control_secondary} have superior power sharing properties. 
Clearly, the communication in the controllers  \eqref{eq:hvdc_coordinated_droop_control_secondary_distributed} and \eqref{eq:voltage_control_secondary} is essential to eliminate the frequency error and sharing frequency control reserves. 
We can note relatively high frequency oscillations in the transient response of the generated power. These oscillations originate from the averaging term in the dynamics of the internal controller variables $\eta_i$. By reducing the constants $c_{ij}^\eta$, these oscillations can be reduced at the expense of slower convergence.

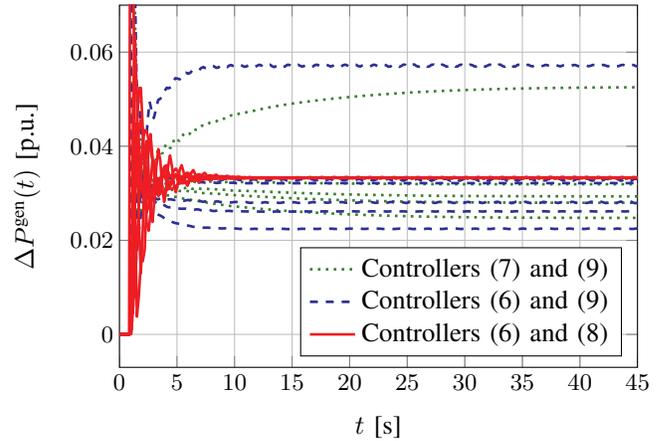
\begin{figure}[t!]
	\centering
	\tikzsetnextfilename{Generators}
	\begin{tikzpicture}
	\begin{axis}[cycle list name=Necsys,
	xlabel={$t$ [s]},
	ylabel={$\Delta P^\text{gen}(t)$ [p.u.]},
	xmin=0,
	xmax=45,
	xtick={0,5,...,45},
	ymin=-0.007,
	ymax=0.070,
	scaled ticks=false, 
	tick label style={/pgf/number format/fixed,
		/pgf/number format/precision=2},
	ytick={0,0.02,...,0.101},
	grid=major,
	height=6.4cm,
	width=0.975\columnwidth,
	legend cell align=left,
	legend pos= south east,
	legend entries={Controllers (7) and (9), Controllers (6) and (9), Controllers (6) and (8)
	},
	]
	\addlegendimage{no markers,OliveGreen, dotted, line width=1pt}
	\addlegendimage{no markers,Blue, dashed, line width=1pt}
	\addlegendimage{no markers,Red, line width=1pt}
	\foreach \x in {1,2,3,4,5,6}{
		\addplot+[dotted,OliveGreen,line join=round] table[x index=0,y index=\x,col sep=tab]{PlotData_coordinated/ACC_Generators.txt};
	}
	\pgfplotsset{cycle list shift=1};
	\foreach \x in {1,2,3,4,5,6}{
		\addplot+[dashed,Blue,line join=round] table[x index=0,y index=\x,col sep=tab]{PlotData_coordinated/CDC_Generators.txt};
	}	
	\pgfplotsset{cycle list shift=2};
	\foreach \x in {1,2,3,4,5,6}{
		\addplot+ [Red,line join=round] table[x index=0,y index=\x,col sep=tab]{PlotData_coordinated/Necsys_Generators.txt};
	}
	\end{axis}
	\end{tikzpicture} 
	\caption{Total increase of generated power in the AC areas. After a reduced generation of $0.2$ p.u. at $t=1$~s. The generated power is increased for all AC areas regardless of the controllers employed. When the distributed generator and converter controllers \eqref{eq:hvdc_coordinated_droop_control_secondary_distributed} and  \eqref{eq:voltage_control_secondary} are employed, all generators generate to the same power and have the same marginal generation costs asymptotically, thus minimizing \eqref{eq:hvdc_coordinated_generation_objective} asymptotically. For \eqref{eq:hvdc_coordinated_droop_control_secondary_distributed} and \eqref{eq:voltage_control_secondary_decentralized_version} or \eqref{eq:hvdc_coordinated_droop_control_secondary_distributed_decentralized_version} and \eqref{eq:voltage_control_secondary_decentralized_version}, the marginal generation costs do not converge to the same value. }
	\label{fig:Generators}
\end{figure}

Fig.~\ref{fig:Converters} shows the power set-points of the converters. For controller \eqref{eq:hvdc_coordinated_droop_control_secondary_distributed}  and \eqref{eq:voltage_control_secondary} the control in area 1 overcompensate the disturbance at the beginning, also the other areas have some minor oscillations. A stable operation point is found after 6 s. Area 2-6 have all an equal share of the disturbance. It takes longer time for the other controllers to find a stable operation point, where controllers \eqref{eq:hvdc_coordinated_droop_control_secondary_distributed} and \eqref{eq:voltage_control_secondary_decentralized_version} are significantly faster than \eqref{eq:hvdc_coordinated_droop_control_secondary_distributed_decentralized_version} and \eqref{eq:voltage_control_secondary_decentralized_version}. The sharing of the disturbance are for both controller not equal, indicated by the different power set-points of the converters. 

\begin{figure}[t!]
	\centering
	\tikzsetnextfilename{ConverterPower}
	\begin{tikzpicture}
	\begin{axis}[cycle list name=Necsys,
	xlabel={$t$ [s]},
	ylabel={$P^\text{inj}_i(t)$ [p.u.]},
	xmin=0,
	xmax=45,
	xtick={0,5,...,45},
	ymin=-0.2,
	ymax=0.070,
	scaled ticks=false, 
	tick label style={/pgf/number format/fixed,
		/pgf/number format/precision=2},
	ytick={-0.2,-0.15,...,0.051},
	grid=major,
	height=6.4cm,
	width=0.975\columnwidth,
	legend cell align=left,
	legend style={at={(0.97,0.5)},anchor= east},
	legend entries={Controllers (7) and (9), Controllers (6) and (9), Controllers (6) and (8)
	},
	]
	\addlegendimage{no markers,OliveGreen, dotted, line width=1pt}
	\addlegendimage{no markers,Blue, dashed, line width=1pt}
	\addlegendimage{no markers,Red, line width=1pt}
	\foreach \x in {1,2,3,4,5,6}{
		\addplot+[dotted,OliveGreen,line join=round] table[x index=0,y index=\x,col sep=tab]{PlotData_coordinated/ACC_PSet.txt};
	}
	\pgfplotsset{cycle list shift=1};
	\foreach \x in {1,2,3,4,5,6}{
		\addplot+[dashed,Blue,line join=round] table[x index=0,y index=\x,col sep=tab]{PlotData_coordinated/CDC_PSet.txt};
	}	
	\pgfplotsset{cycle list shift=2};
	\foreach \x in {1,2,3,4,5,6}{
		\addplot+ [Red,line join=round] table[x index=0,y index=\x,col sep=tab]{PlotData_coordinated/Necsys_PSet.txt};
	}
	\end{axis}
\end{tikzpicture} 
	\caption{Converter power set-points for all 6 areas. After a reduced generation of $0.2$ p.u. at $t=1$~s. For all controllers the area with the disturbance starts to extract power from the MTDC grid and all others inject power. When the distributed generator and converter controllers \eqref{eq:hvdc_coordinated_droop_control_secondary_distributed} and \eqref{eq:voltage_control_secondary} are employed, the converter set-points stabilizing after $6$~s to a point with perfect sharing of the disturbance. For \eqref{eq:hvdc_coordinated_droop_control_secondary_distributed} and	\eqref{eq:voltage_control_secondary_decentralized_version} or  \eqref{eq:hvdc_coordinated_droop_control_secondary_distributed_decentralized_version} and \eqref{eq:voltage_control_secondary_decentralized_version}, it takes some more time to find a stable point and the converters differ from each other. }
	\label{fig:Converters}
\end{figure}
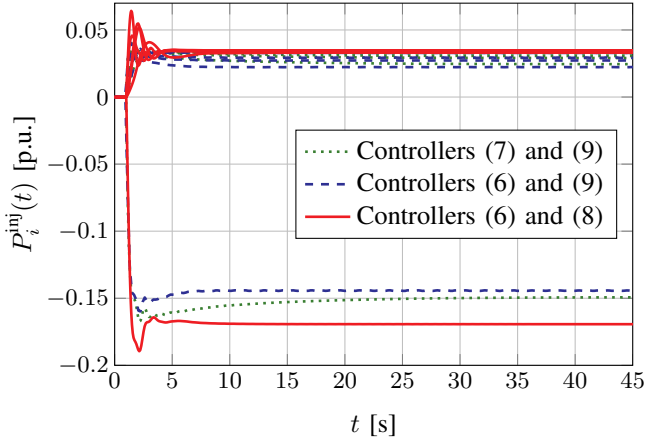
\section{Conclusions}
\label{sec:discussion}
In this paper we have studied distributed secondary controllers for sharing frequency control reserves of asynchronous AC systems connected through an MTDC system. The proposed controllers were shown to stabilize the interconnected AC systems and the MTDC grid. The AC grid frequencies were shown to converge to the nominal frequency. Furthermore, quadratic cost functions of the voltage deviations of the MTDC terminals and of the generated power was minimized asymptotically. The results were first derived for single-generator AC grids and purely resistive MTDC grids, and later generalized to AC grids of arbitrary size, and HVDC links modelled as $\pi$-links. Finally, the results were validated on a six-terminal MTDC system with connected IEEE 14 bus AC grids. 
Future work will focus on stability results under communication delays. 
\appendix
\begin{proof}[Proof of Theorem \ref{th:hvdc_coordinated_voltage_control_secondary_network_stability}]
Without loss of generality, let $P^m_i=0_{n_i}, \; i=1, \dots, n$. Consider the Lyapunov function candidate, which is positive definite and radially unbounded: 
\begin{align*}
W&= \sum_{i=1}^n \frac{K^\omega_{i_1}}{2K^V_i} \Bigg(  (\delta''_i)^T S_i^T \mathcal{L}^\text{AC}_{i} S_i \delta''_i  + \hat{\omega}_i^T M^{-1}_i \hat{\omega}_i \Bigg)   \nonumber \\
&\;\;\;\;  + \frac{V^\text{nom}}{2} \hat{V}^T C \hat{V} + \frac 12 {\eta}^T {\eta} + \frac 12 {\phi''}^T S^T \mathcal{L}_\phi S {\phi''}.
\end{align*}
Differentiating $W$ along trajectories of \eqref{eq:hvdc_coordinated_voltage_control_secondary_network_cl_dynamics} yields
\begin{align*}
\dot{W} 
&= \sum_{i=1}^n \frac{K^\omega_{i_1}}{K^V_i } \Bigg( (\delta''_i)^T S_i^T \mathcal{L}^\text{AC}_{i} S_i S_i^T \hat{\omega}_i \\
&\;\;\;\;  + \hat{\omega}_i^T \Big( - (K^{\text{droop}}_{i} + K^\omega_{i}) \hat{\omega}_{i} + e_1 K_i^V \hat{V}_i -\mathcal{L}^\text{AC}_{i} S_i
\delta_i''  \nonumber \\
& \;\;\;\; - \frac{K^V_{i}}{K^\omega_{i_1}} K^\text{droop, I}_{i} 1_{n_i} \eta_i - e_1 e_i^T \mathcal{L}_\phi S \phi'' \Big) \Bigg) \\
&\;\;\;\;  + \hat{V}^T \Big( \tilde{K}^\omega \tilde{\omega} -  \big(V^\text{nom} \mathcal{L}_R + {K^V} \big) \hat{V} + \mathcal{L}_\phi S \phi'' \Big) \\
&\;\;\;\; + \eta^T \Bigg( \sum_{i=1}^n e_i 1_n^T K_{i}^{\text{droop,I}}\hat{\omega}_{i} - \mathcal{L}_\eta \eta \Bigg) \\
&\;\;\;\; + {\phi''}^T S^T \mathcal{L}_\phi S \Big( S^T(K^V)^{-1}\tilde{K}^\omega \tilde{\omega}  -\gamma I_{n-1} \phi'' \Big)
\end{align*}
Since $SS^T = I_n - \frac 1n 1_{n\times n}$ and $S_iS_i^T = I_{n_i} - \frac {1}{n_i} 1_{n_i\times n_i}$, we have  $\mathcal{L}_\phi SS^T = \mathcal{L}_\phi$ and $\mathcal{L}_i^\text{AC} S_iS_i^T = \mathcal{L}_i^\text{AC}$ \cite{Andreasson2014TAC}. Furthermore 
\begin{IEEEeqnarray*}{rcl}
\sum_{i=1}^n \frac{K^\omega_{i_1}}{K^V_i } \hat{\omega}_i^T e_1 e_i^T \mathcal{L}_\phi S \phi'' & = & \tilde{\omega}^T \tilde{K}^\omega (K^V)^{-1} \mathcal{L}_\phi S \phi'' \\
\sum_{i=1}^n \frac{K^\omega_{i_1}}{K^V_i } \hat{\omega}_i^T e_1 K^V_i \hat{V}_i & = & \tilde{\omega}^T \tilde{K}^\omega \hat{V} \\
\eta^T  \sum_{i=1}^n e_i 1_n^T K_{i}^{\text{droop,I}}\hat{\omega}_{i} &=& \sum_{i=1}^n \eta_i 1_n^T K_{i}^{\text{droop,I}}\hat{\omega}_{i}.
\end{IEEEeqnarray*}
By defining 
\begin{align*}
\bar{V}' = \begin{bmatrix}
\frac{1}{\sqrt{n}} 1_n & S
\end{bmatrix} ^T & \bar{V}
\qquad
\bar{V} = \begin{bmatrix}
\frac{1}{\sqrt{n}} 1_n & S
\end{bmatrix} \bar{V}',
\end{align*} 
we obtain $\bar{V}^T\mathcal{L}_R \bar{V} = \bar{V}''^T S^T\mathcal{L}_R S \bar{V}'' $ and $\bar{V}^T\mathcal{L}_\phi S \bar{\phi} = \bar{V}''^T S^T\mathcal{L}_\phi S \bar{\phi}$, where $\bar{V}'' = [\bar{V}'_2, \dots, \bar{V}'_n]^T$. 
We thus obtain
\begin{align*}
\dot{W} &\le -\begin{bmatrix}
\tilde{\omega} \\ \hat{V}
\end{bmatrix}^T
\underbrace{\begin{bmatrix}
\tilde{K}^\omega(K^V)^{-1}(\tilde{K}^\omega {+} \frac 12 \tilde{K}^\text{droop}) & -\tilde{K}^\omega \\
 -\tilde{K}^\omega & K^V
\end{bmatrix}}_{\triangleq Q_1}
\begin{bmatrix}
\tilde{\omega} \\ \hat{V}
\end{bmatrix} \\
&\;\;\;\; \,-  \begin{bmatrix}
\bar{V}'' \\ {\phi}''
\end{bmatrix}^T 
\underbrace{\begin{bmatrix}
V^\text{nom} S^T\mathcal{L}_R S & -\frac{k_\phi}{2} S^T\mathcal{L}_R S \\
-\frac{k_\phi}{2} S^T\mathcal{L}_R S & \gamma k_\phi S^T\mathcal{L}_R S
\end{bmatrix}}_{\triangleq Q_2}
\begin{bmatrix}
\bar{V}'' \\ {\phi}''
\end{bmatrix} \\
&\;\;\;\; \,- \frac 12 \sum_{i=1}^n \hat{\omega}_i^T K^{\text{droop}}_{i} \hat{\omega}_{i} - \eta^T \mathcal{L}_\eta \eta 
\end{align*}
where  $\tilde{K}^\text{droop}=\diag(\tilde{K}^\text{droop}_{1_1}, \dots, \tilde{K}^\text{droop}_{n_1})$. 
By applying the Schur complement condition for positive definiteness, we see that $Q_1$ is positive definite, since
$
K^\omega (K^V)^{-1}(K^\omega + K^\text{droop}) - K^\omega (K_V)^{-1} K^\omega 
= K^\omega (K^V)^{-1} K^\text{droop} > 0
$. By similar arguments $Q_2$ is positive definite iff
$
( \gamma k_\phi- {k_\phi^2}/{(4 V^\text{nom})} ) S^T \mathcal{L}_R S > 0
$. Clearly the above matrix inequality holds under Assumption~\ref{ass:gamma_coordinated}, since $S^T \mathcal{L}_R S \ge 0$, and $Sx\ne k 1_n$ for $k\ne 0$. 
Thus $\dot{W}\le0$ under Assumption~\ref{ass:gamma_coordinated}, and the set where $W$ is non-decreasing is given by 
$
G = \{ \delta_i'' \in \mathbb{R}^{n_i}, \; i=1, \dots, n, \eta = k 1_n \}
$, 
for any $k\in \mathbb{R}$. The largest invariant set in $G$ with respect to \eqref{eq:hvdc_coordinated_voltage_control_secondary_network_cl_dynamics} is the origin, since $\mathcal{L}^\text{AC}_{i} S_i
\delta_i''  + \frac{K^V_{i}}{K^\omega_{i}} K^\text{droop, I}_{i} 1_{n_i} \eta_i = 0_{n_i}$ implies $\delta_i'' = 0_{n_i}$ and $\eta_i=0$. By LaSalle's theorem, the origin of \eqref{eq:hvdc_coordinated_voltage_control_secondary_network_cl_dynamics} is globally asymptotically stable. 
\end{proof}

\begin{proof}[Proof sketch of Theorem~\ref{th:hvdc_coordinated_voltage_control_pi_link_stability}]
Without loss of generality, let $P^m=0_n$ in \eqref{eq:hvdc_coordinated_voltage_control_pi_link_cl_dynamics}.  
One can verify that 
\begin{IEEEeqnarray*}{lcl}
W &=& \frac{V^\text{nom}}{2} \Bigg( \hat{V}^TC \hat{V} {+} \sum_{i=1}^l  {I}_i^TL^{-1} {I}_i {+}  \sum_{i=1}^{\ell-1}  {V}_i^TC^{\text{line}} {V}_i \Bigg)\\
&& + \frac 12 \bar{\omega}^T K^\omega (K^V)^{-1} M^{-1}\bar{\omega} + \frac 12 \bar{\eta}^T \bar{\eta} + \frac 12 \bar{\phi}^T S^T \mathcal{L}_\phi S \bar{\phi}
\end{IEEEeqnarray*}
is a Lyapunov function  for \eqref{eq:hvdc_coordinated_voltage_control_pi_link_cl_dynamics},
where $C^{\text{line}}=\diag(C^{\text{line}}_1 \dots, C^{\text{line}}_m)$. 
Differentiating $W$ along trajectories of \eqref{eq:hvdc_coordinated_voltage_control_pi_link_cl_dynamics} where $P^m=0_n$ yields 
 $\dot{W}\le 0$. 
By LaSalle's theorem, 
the equilibrium of \eqref{eq:hvdc_coordinated_voltage_control_pi_link_cl_dynamics} is globally asymptotically stable. 
\end{proof}

\bibliography{references}

\begin{thebibliography}{10}
\providecommand{\url}[1]{#1}
\csname url@samestyle\endcsname
\providecommand{\newblock}{\relax}
\providecommand{\bibinfo}[2]{#2}
\providecommand{\BIBentrySTDinterwordspacing}{\spaceskip=0pt\relax}
\providecommand{\BIBentryALTinterwordstretchfactor}{4}
\providecommand{\BIBentryALTinterwordspacing}{\spaceskip=\fontdimen2\font plus
\BIBentryALTinterwordstretchfactor\fontdimen3\font minus
  \fontdimen4\font\relax}
\providecommand{\BIBforeignlanguage}[2]{{%
\expandafter\ifx\csname l@#1\endcsname\relax
\typeout{** WARNING: IEEEtran.bst: No hyphenation pattern has been}%
\typeout{** loaded for the language `#1'. Using the pattern for}%
\typeout{** the default language instead.}%
\else
\language=\csname l@#1\endcsname
\fi
#2}}
\providecommand{\BIBdecl}{\relax}
\BIBdecl

\bibitem{melhem2013electricity}
Z.~Melhem, \emph{Electricity Transmission, Distribution and Storage Systems},
  ser. Woodhead Publishing Series in Energy.\hskip 1em plus 0.5em minus
  0.4em\relax Elsevier Science, 2013.

\bibitem{Haileselassie2013Power}
T.~M. Haileselassie and K.~Uhlen, ``Power system security in a meshed {N}orth
  {S}ea {HVDC} grid,'' \emph{Proc. of the IEEE}, vol. 101, no.~4, pp. 978--990,
  2013.

\bibitem{arrillaga2007flexible}
J.~Arrillaga, Y.~H. Liu, and N.~R. Watson, \emph{Flexible power transmission:
  the {HVDC} options}.\hskip 1em plus 0.5em minus 0.4em\relax John Wiley \&
  Sons, 2007.

\bibitem{kundur1994power}
P.~Kundur, \emph{Power System Stability and Control}, ser. The {EPRI} Power
  System Engineering.\hskip 1em plus 0.5em minus 0.4em\relax McGraw-Hill
  Companies, Inc., 1994.

\bibitem{li2008frequency}
R.~Li, S.~Bozhko, and G.~Asher, ``Frequency control design for offshore wind
  farm grid with {LCC}-{HVDC} link connection,'' \emph{IEEE Trans. on Power
  Electronics}, vol.~23, no.~3, pp. 1085--1092, 2008.

\bibitem{Haileselassie2010}
T.~M. Haileselassie and K.~Uhlen, ``Primary frequency control of remote grids
  connected by multi-terminal {HVDC},'' in \emph{Power and Energy Society
  General Meeting, 2010 IEEE}, July 2010, pp. 1--6.

\bibitem{Chaudhuri2011}
N.~R. Chaudhuri, R.~Majumder, B.~Chaudhuri, and J.~Pan, ``Stability analysis of
  {VSC} {MTDC} grids connected to multimachine {AC} systems,'' \emph{{IEEE}
  Trans. on Power Delivery}, vol.~26, no.~4, pp. 2774--2784, 2011.

\bibitem{dai2011voltage}
J.~Dai, Y.~Phulpin, A.~Sarlette, and D.~Ernst, ``Voltage control in an {HVDC}
  system to share primary frequency reserves between non-synchronous areas,''
  in \emph{Power systems computation conf.}, 2011.

\bibitem{silva2012provision}
B.~Silva, C.~L. Moreira, L.~Seca, Y.~Phulpin, and J.~A. Peas~Lopes, ``Provision
  of inertial and primary frequency control services using offshore
  multiterminal {HVDC} networks,'' \emph{IEEE Transactions on Sustainable
  Energy}, vol.~3, no.~4, pp. 800--808, 2012.

\bibitem{taylordecentralized}
J.~A. Taylor and L.~Scardovi, ``Decentralized control of {DC}-segmented power
  systems,'' in \emph{Annual Allerton Conference}, 2014.

\bibitem{dai2010impact}
J.~Dai, Y.~Phulpin, A.~Sarlette, and D.~Ernst, ``Impact of delays on a
  consensus-based primary frequency control scheme for {AC} systems connected
  by a multi-terminal {HVDC} grid,'' in \emph{IEEE Bulk Power System Dynamics
  and Control}, 2010, pp. 1--9.

\bibitem{dai2013voltage}
J.~Dai and G.~Damm, ``An improved control law using {HVDC} systems for
  frequency control,'' in \emph{Power systems computation conf.}, 2011.

\bibitem{andreasson2014distributedSecondary}
M.~Andreasson, R.~Wiget, D.~V. Dimarogonas, K.~H. Johansson, and G.~Andersson,
  ``Distributed secondary frequency control through multi-terminal {HVDC}
  transmission systems,'' \emph{arXiv:1503.07358}, 2015.

\bibitem{andreasson2015coordinated}
------, ``Coordinated frequency control through {MTDC} transmission systems,''
  \emph{arXiv preprint arXiv:1505.07402}, 2015.

\bibitem{andreasson2014distributed}
------, ``Distributed primary frequency control through multi-terminal {HVDC}
  transmission systems,'' in \emph{American Control Conf.}, 2015.

\bibitem{biggs1993algebraic}
N.~Biggs, \emph{Algebraic graph theory}.\hskip 1em plus 0.5em minus 0.4em\relax
  Cambridge university press, 1993.

\bibitem{Dorfler2013_Kron}
F.~D\"{o}rfler and F.~Bullo, ``Kron reduction of graphs with applications to
  electrical networks,'' \emph{IEEE Transactions on Circuits and Systems I:
  Regular Papers}, vol.~60, no.~1, pp. 150--163, Jan 2013.

\bibitem{machowski2008power}
J.~Machowski, J.~W. Bialek, and J.~R. Bumby, \emph{Power System Dynamics:
  Stability and Control}.\hskip 1em plus 0.5em minus 0.4em\relax Wiley, 2008.

\bibitem{jovcic2015high}
D.~Jovcic and K.~Ahmed, \emph{High Voltage Direct Current Transmission:
  Converters, Systems and {DC} Grids}.\hskip 1em plus 0.5em minus 0.4em\relax
  John Wiley \& Sons, 2015.

\bibitem{demiray2008}
T.~Demiray, ``Simulation of power system dynamics using dynamic phasor
  models,'' Ph.D. dissertation, ETH Zurich, 2008.

\bibitem{Milano}
F.~Milano, \emph{Power System Modelling and Scripting}.\hskip 1em plus 0.5em
  minus 0.4em\relax Springer, 2010.

\bibitem{Andreasson2014TAC}
M.~Andreasson, D.~Dimarogonas, H.~Sandberg, and K.~Johansson, ``Distributed
  control of networked dynamical systems: Static feedback, integral action and
  consensus,'' \emph{IEEE Transactions on Automatic Control}, vol.~59, no.~7,
  pp. 1750--1764, July 2014.

\end{thebibliography}
\bibliographystyle{IEEEtran}

\vphantom{m}  

\end{document}